\documentclass{amsart}[12pt]
\usepackage{fullpage} 
\usepackage{mathrsfs,fontenc,enumitem}
\usepackage[alphabetic]{amsrefs}
\usepackage{graphicx}
\usepackage[mathcal]{euscript}
\usepackage{caption,subcaption}
\usepackage{tikz,tikz-cd}
\usetikzlibrary{arrows}

\setcounter{tocdepth}{1}

\newtheorem{thm}{Theorem}
\newtheorem{lem}[thm]{Lemma}
\newtheorem{prop}[thm]{Proposition}

\newtheorem{cor}[thm]{Corollary}
\newtheorem{defe}[thm]{Definition}
 
\theoremstyle{remark}
\newtheorem{rem}[thm]{Remark}
\newtheorem{exam}[thm]{Example}

\DefineSimpleKey{bib}{myurl}
\newcommand\myurl[1]{\url{#1}}
\BibSpec{webpage}{
  +{}{\PrintAuthors} {author}
  +{,}{ \textit} {title}
  +{}{ \parenthesize} {date}
  +{,}{ \myurl} {myurl}
}

\newcommand{\g}{ \mathfrak{g} }
\newcommand{\gl}{ \mathfrak{gl} }
\newcommand{\ad}{ \operatorname{ad} }
\newcommand{\Ad}{\operatorname{Ad}}
\newcommand{\Gal}{\operatorname{Gal}}
\newcommand{\GL}{\operatorname{GL}}

\newcommand{\Ct}{\mathbb{C}(\!(t)\!)}

\newcommand{\CT}{\mathbb{C}[\![t]\!]}
\newcommand{\CS}{\mathbb{C}[\![s]\!]}

\newcommand{\bQ}{\mathbb{Q}}
\newcommand{\cO}{\mathcal{O}}
\newcommand{\cU}{\mathcal{U}}
\newcommand{\bZ}{\mathbb{Z}}
\newcommand{\ra}{\rightarrow}
\newcommand{\cK}{\mathcal{K}}
\newcommand{\cKx}{\mathcal{K}\{x\}}

\newcommand{\bC}{\mathbb{C}}
\newcommand{\Ext}{\mathrm{Ext}}
\newcommand{\HH}{\mathrm{H}}

\newcommand{\fg}{\mathfrak{g}}
\newcommand{\Span}{\operatorname{span}}
\newcommand{\Hom}{\operatorname{Hom}}
\newcommand{\Nilp}{\mathrm{Nilp}}

\newcommand{\fh}{\mathfrak{h}}

\begin{document}
\title{Jordan decomposition for formal $G$-connections} 
\author{Masoud Kamgarpour and Samuel Weatherhog}
\email{masoud@uq.edu.au}
\email{samuel.weatherhog@gmail.com} 
\address{School of Mathematics and Physics, The University of Queensland, St. Lucia, Queensland, Australia}

\subjclass[2010]{13N10}

\begin{abstract} A theorem of Hukuhara, Levelt, and Turrittin states that every formal   differential operator has a Jordan decomposition. This theorem was generalised by Babbit and Varadarajan to the case of formal $G$-connections where $G$ is a semisimple group. In this paper, we provide straightforward proofs of these facts,  highlighting the analogy between the linear and differential settings.  
 \end{abstract} 

\keywords{Meromorphic ordinary differential equations, formal   differential operators, Hukuhara-Levelt-Turrittin Theorem, Differential polynomials, Differential Hensel's Lemma, Newton polygons, formal $G$-connections, canonical form, Babbit-Varadarajan Theorem}

\maketitle
\tableofcontents

\section{Introduction} \label{s:intro} Let $\cK:=\bC(\!(t)\!)$ be the field of formal   Laurent series and consider the derivation $d:\cK\ra \cK$ defined by $d:=t\frac{d}{dt}$. Let $V$ be a finite dimensional vector space over $\cK$. A \emph{formal   differential operator} is a $\bC$-linear map $D:V\ra V$ satisfying the Leibniz rule 
\begin{equation}\label{eq:diffOp}
D(av)=aD(v)+d(a)v,\quad \quad a\in \cK, \quad v\in V.
\end{equation}

It is well-known that linear operators encode linear equations. Similarly, differential operators encode (ordinary) differential equations. Thus, the study of formal differential operators is indispensable in the theory of meromorphic differential equations; see \cite{Varadarajan} for an extensive review. 

In analogy with linear operators, differential operators have matrix presentations and it will be convenient to have these at our disposal. Indeed, choosing a basis for $V$, we can represent $D$ as an operator $d+A$ where $A$ is an $n\times n$ matrix with values in $\cK$. Changing the basis by an element $g\in GL_n(\cK)$ amounts to changing the operator $d+A$ to $d+g^{-1}Ag + g^{-1}dg$. 
Here $dg$ denotes the matrix obtained by applying the derivation $d$ to each entry of the matrix $g$. The map 
\[
A\mapsto g^{-1}Ag+g^{-1}dg
\]
 is called \emph{gauge transformation} and plays an important role in the theory.

 \subsection{Semisimple Connections} To formulate a Jordan decomposition, we need a notion of semisimplicity. We start with a definition for formal differential operators.
 
 \begin{defe} Let $D:V\ra V$ be a formal differential operator. Then $D$ is
 \begin{enumerate} 
 \item[(i)]   \emph{simple} if $V$ has no $D$-invariant subspace
 \item[(ii)]  \emph{semisimple} if every $D$-invariant subspace has a $D$-invariant complement
 \item[(iii)]  \emph{diagonalisable} if it has a presentation of the form $d+A$ where $A$ is a diagonal matrix
 \item[(iv)]  \emph{potentially diagonalisable} if it is diagonalisable after a finite base change. 
\end{enumerate} 
\end{defe}

 It is easy to show that an operator is semisimple if and only if it is a direct sum of simple ones. The following theorem gives an explicit description of semisimple operators.

\begin{thm}[Levelt] \label{t:semisimple} 
A formal differential operator is semisimple if and only if it is potentially diagonalisable. 
\end{thm}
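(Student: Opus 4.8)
The plan is to prove the two implications by Galois descent, using that $\cK_b/\cK$ is Galois with group $\Gamma\cong\bZ/b\bZ$ of order invertible in $\bC$. I will always regard $V_b=V\otimes_\cK\cK_b$ together with the $\Gamma$-action on the second tensor factor; this action is $\cK_b$-semilinear, fixes $V$, and commutes with $D_b$ (the canonical extension of $d$ to $\cK_b$ is $\Gamma$-equivariant). So, under it, a $\cK$-subspace of $V$ corresponds to a $\Gamma$-stable $\cK_b$-subspace of $V_b$, and a $\cK$-linear endomorphism of $V$ commuting with $D$ corresponds to a $\Gamma$-equivariant, $\cK_b$-linear endomorphism of $V_b$ commuting with $D_b$.

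For the ``if'' direction, suppose $D_b$ is diagonalisable, so that $V_b=\bigoplus_i\cK_b e_i$ is a direct sum of rank one --- hence simple --- $D_b$-submodules, and therefore $D_b$ is semisimple over $\cK_b$. I then descend semisimplicity: given a $D$-invariant $W\subseteq V$, the subspace $W_b$ is $D_b$-invariant, so it has a $D_b$-invariant complement, giving a projection $\pi\colon V_b\to W_b$ which is $\cK_b$-linear and commutes with $D_b$ (its image and kernel being $D_b$-submodules). The average $\widetilde\pi:=\tfrac1b\sum_{\gamma\in\Gamma}\gamma\pi\gamma^{-1}$ is again a $\cK_b$-linear projection onto $W_b$, still commutes with $D_b$, and is now $\Gamma$-equivariant. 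By the dictionary above it restricts to a $\cK$-linear projection of $V$ onto $W$ commuting with $D$, and its kernel is the desired $D$-invariant complement of $W$; hence $D$ is semisimple.

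For the ``only if'' direction, I first reduce to the case $D$ simple, using that semisimple operators are exactly the direct sums of simple ones. Here I invoke the key fact that, after a finite base change, every differential operator has an eigenvalue (Levelt's corollary, which the paper establishes by elementary means): this produces a $b$ and a rank one $D_b$-submodule $L=\cK_b v\subseteq V_b$ with $D_b v=\lambda v$, $\lambda\in\cK_b$. Then $U:=\sum_{\gamma\in\Gamma}\gamma(L)$ is a nonzero $\cK_b$-subspace that is $\Gamma$-stable and $D_b$-stable --- each $\gamma(L)$ being a rank one $D_b$-submodule, as $D_b(\gamma v)=\gamma(\lambda)\,\gamma(v)$ --- so it descends to a nonzero $D$-invariant subspace of $V$, which by simplicity is all of $V$. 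Thus $V_b=\sum_{\gamma}\gamma(L)$ is a sum of rank one $D_b$-submodules, hence a direct sum of a subfamily of them, and $D_b$ is diagonalisable. For general semisimple $D=\bigoplus_i M_i$ with $M_i$ simple, I apply this to each $M_i$, obtain $b_i$ making $(M_i)_{b_i}$ diagonalisable, and take $b$ a common multiple of the $b_i$; diagonal form survives base change, so $D_b$ is diagonalisable.

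The genuinely hard part is the eigenvalue-existence statement invoked in the ``only if'' direction, which I expect to be the main obstacle; it is the content of the Newton-polygon and differential-Hensel arguments developed in the rest of the paper. Inside the present argument the only delicate point is that the Galois-theoretic operations preserve differential structure --- that averaging a differential projector yields a differential projector, and that a $\Gamma$-stable, $D_b$-stable $\cK_b$-subspace descends to a $D$-stable $\cK$-subspace --- which is precisely where invertibility of $b$ in $\bC$ is used.
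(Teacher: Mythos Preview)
Your proof is correct and follows essentially the same approach as the paper. Both arguments take Corollary~\ref{c:main} (existence of an eigenvalue after finite base change) as the hard input and handle the ``if'' direction by Galois descent of semisimplicity --- the paper packages this as the lemma ``$D$ is semisimple iff $D_b$ is'' and cites Levelt, while you spell out the averaging of a projector explicitly.

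The one structural difference worth noting is in the ``only if'' direction. The paper argues by induction on $\dim V$: after a base change (using the lemma to keep semisimplicity), find an eigenvector, split off its span using semisimplicity, and recurse on the complement. You instead reduce to the simple case and argue in one stroke: take a rank-one $D_b$-line $L$, observe that its Galois translates $\gamma(L)$ are again $D_b$-lines, and note that their sum is $\Gamma$-stable hence descends to a nonzero $D$-submodule of $V$, which by simplicity is all of $V$; thus $V_b$ is a sum of $D_b$-lines. Your route avoids the repeated base changes implicit in the paper's induction (where each inductive step may require a further extension), at the cost of an extra Galois-orbit observation. Both are short once Corollary~\ref{c:main} is in hand.
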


For future use, we will need the following functorial property. Let $D:V\to V$ be a differential operator and write $D=d+A$ with $A\in \gl(V)$. Consider the adjoint map  
\[
\ad: \gl(V) \ra \gl(\gl(V))
\]
Then $\ad(A)$ is a linear operator on $\gl(V)$; therefore, $d+\ad(A)$ is a differential operator on $\gl(V)$. The following observation will be useful. 

\begin{prop} \label{p:semisimple} The differential operator $d+A$ is semisimple if and only if the differential operator $d+\ad(A): \gl(V)\ra \gl(V)$ is semisimple. 
\end{prop}

 \subsection{Jordan Decomposition} We are now ready to discuss the notion of Jordan decomposition.
 
 \begin{thm}[Hukuhara-Levelt-Turrittin] 
 Every formal   differential operator $D$ can be written as a sum $D=S+N$ of a semisimple differential operator $S$ together with a nilpotent $\cK$-linear operator $N$ such that $S$ and $N$ commute (as $\bC$-linear maps). Moreover, the pair $(S,N)$ is unique. \label{t:main}
\end{thm}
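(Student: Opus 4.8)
Granting that every differential operator admits an eigenvector after a finite base change (equivalently, that every monic differential polynomial has a linear factorisation over some $\cK_b$), the plan is to first establish a structure theorem and then read off both halves of the statement. \emph{Step 1.} By the eigenvalue result, after a finite base change $D_b$ has a one-dimensional invariant subspace, say $D_b v=\lambda v$ with $\lambda\in\cK_b$; writing $\lambda=q+\lambda'$ with polar part $q\in t^{-1/b}\bC[t^{-1/b}]$, and tensoring $V_b$ by the rank one operator $E_{-q}:=d_b-q$ followed by a rank one gauge transformation, we may arrange that this line is $D_b$-stable with $\lambda\in\bC$. \emph{Step 2.} Applying Step~1 repeatedly to sub- and quotient modules and passing to a common refinement of the finitely many base changes involved, we obtain a full flag of $V_b$ with rank one subquotients $E_{\lambda_j}:=(\cK_b,d_b+\lambda_j)$. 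The key computation $\Ext^1(E_\mu,E_\nu)=\operatorname{coker}(d_b+(\nu-\mu)\colon\cK_b\to\cK_b)$ (and similarly $\Hom(E_\mu,E_\nu)=\Ker(d_b+(\nu-\mu))$) shows these vanish unless $\mu-\nu\in\tfrac1b\bZ\oplus t^{1/b}\bC[[t^{1/b}]]$; upgrading to internal $\Hom$ by a standard filtration argument, there are therefore no extensions and no nonzero maps between subquotients whose polar parts lie in distinct classes of the relation $q\sim q'\Leftrightarrow q-q'\in\tfrac1b\bZ\oplus t^{1/b}\bC[[t^{1/b}]]$. Hence the flag splits along these classes: $V_b\cong\bigoplus_i E_{q_i}\otimes R_i$ with the $q_i$ pairwise inequivalent polar parts and each $R_i$ regular singular. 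Finally, the classical fact (reprovable in the same spirit, by extracting $\bC$-valued eigenvalues and resolving the integer-shift resonances) that a regular singular operator is gauge equivalent to $d_b+A_0^{(i)}$ with $A_0^{(i)}$ a constant matrix puts each $R_i$ in standard form.

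\emph{Step 3: assembling $S$, $N$, and descent.} Take the additive Jordan decomposition $A_0^{(i)}=s_i+n_i$ over $\bC$, let $S$ act on $E_{q_i}\otimes R_i$ as $d_b+(q_iI+s_i)$ and $N$ act there as multiplication by the constant nilpotent matrix $n_i$. Then $N$ is $\cK_b$-linear and nilpotent, $D_b=S+N$, and $[S,N]=0$ since a constant matrix has $d_b$-horizontal entries (so commutes with $d_b$) and $s_i$ commutes with $n_i$; conjugating each block by a constant matrix diagonalising $s_i$ shows $S$ is diagonalisable, hence semisimple by Theorem~\ref{t:semisimple}. Because the decomposition $V_b\cong\bigoplus_i E_{q_i}\otimes R_i$ is intrinsic — $E_{q_i}\otimes R_i$ is the largest submodule all of whose simple constituents have polar part in the class of $q_i$ — and the Jordan decomposition of the $A_0^{(i)}$ is canonical, both $S$ and $N$ are $\Gal(\cK_b/\cK)$-equivariant and therefore descend to $\cK$-operators on $V$ with the stated properties.

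\emph{Step 4: uniqueness.} If $D=S+N=S'+N'$ as in the statement, then $S$ and $S'$ commute with $D$, hence preserve the intrinsic decomposition above, so we may work one block at a time; untwisting by $E_{-q_i}$ reduces to the regular singular case $D_b=d_b+A_0$. Writing $S=d_b+(A_0-N)$, the relation $[S,N]=0$ reads $d_b(N)=[N,A_0]$, so expanding $N=\sum_k N_kt^{k/b}$ gives $(\tfrac kb+\ad_{A_0})N_k=0$; a nonzero term with $k\neq 0$ would couple eigenspaces of $A_0$ whose eigenvalue difference is exactly $-k/b$, creating a non-split extension and violating the semisimplicity of $S$. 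Thus $N$ is constant, commutes with $A_0$, and semisimplicity of $S$ then forces $N=A_0^{\mathrm{nil}}=n_i$; so $N=N'$ and $S=S'$ (over $\cK$ one first base-changes to $\cK_b$). I expect Step~2 to be the main obstacle: extracting from the iterated eigenvalue construction a decomposition that is simultaneously a genuine direct sum — which needs the $\Ext^1$ separation of inequivalent exponential parts and its upgrade to arbitrary rank — and \emph{canonical}, the latter being precisely what legitimises the descent in Step~3 and the blockwise reduction in Step~4.
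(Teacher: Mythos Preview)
Your argument is correct and closely parallels the paper's, though the packaging differs in a few instructive ways.

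Both proofs run: eigenvalue existence $\Rightarrow$ $\Ext^1$-based splitting into blocks with a single similarity class of eigenvalues $\Rightarrow$ reduction to a ``trivial'' block type $\Rightarrow$ Galois descent. The paper carries out the splitting as a \emph{generalised eigenspace decomposition} $V_b=\bigoplus V_b(a_i)$ (its Theorem~\ref{t:eigenspace}), then translates by $a_i$ to land in the \emph{unipotent} case and proves directly (Lemma~\ref{l:unipot}) that unipotent operators are exactly $d_b+N$ with $N$ a constant nilpotent matrix. You instead group by polar part to obtain the Turrittin-style decomposition $V_b\cong\bigoplus E_{q_i}\otimes R_i$ with $R_i$ \emph{regular singular}, and then invoke the classical reduction of $R_i$ to $d_b+A_0^{(i)}$ with $A_0^{(i)}$ constant, taking the linear Jordan decomposition of $A_0^{(i)}$. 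These are equivalent: your similarity relation $q-q'\in\tfrac{1}{b}\bZ\oplus t^{1/b}\cO_b$ is exactly the paper's Definition~\ref{d:similar}, and the paper's unipotent lemma is the special case $A_0^{(i)}$ nilpotent of your black-boxed classical fact. The paper is more self-contained here; you are leaning on a result that, in the paper's framework, is what Lemma~\ref{l:unipot} together with the eigenspace decomposition restricted to the regular-singular range actually proves.

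Two further differences. First, the paper defers uniqueness entirely to Levelt, and then uses that uniqueness to run the Galois descent; you supply your own uniqueness argument (Step~4), which is fine, but your descent in Step~3 is phrased as following from ``canonicity'' before uniqueness is established. The gauge bringing $R_i$ to constant form is not unique (shearing transformations shift eigenvalues by elements of $\tfrac{1}{b}\bZ$), so the cleanest justification that your $N$ is gauge-independent and hence Galois-equivariant is precisely the uniqueness you prove in Step~4 --- you should either reorder, or note explicitly that canonicity here means invoking Step~4 over $\cK_b$. Second, your uniqueness argument is sharper than what the paper records: the expansion $(\tfrac{k}{b}+\ad_{A_0})N_k=0$ combined with the observation that a nonzero $N_k$ for $k\neq0$ forces $S=d_b+(A_0-N)$ to contain a non-split length-two block (hence fail semisimplicity) is a genuine proof, whereas the paper simply cites \cite{Levelt}.
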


The above theorem has numerous applications in the theory of differential operators and other areas of mathematics, cf. \cite{KatzNilpotent, Katz,   Luu, BoalchYamakawa, KamgarpourSage}.
The existence result was first proved by Turrittin \cite{Turrittin}, building on earlier work of Hukuhara \cite{Hukuhara}. Turrittin's argument was rather complicated involving nine different cases. Subsequently, Levelt gave a more conceptual (albeit still not straightforward) proof and formulated the correct uniqueness  statement \cite{Levelt}. As a corollary, he concluded:

\begin{cor} \label{c:main}
Every formal   differential operator has, after an appropriate finite base change,  an eigenvalue. 
\end{cor}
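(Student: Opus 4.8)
The plan is to deduce this directly from Theorems~\ref{t:semisimple} and~\ref{t:main} by cutting $D$ down to a semisimple piece. First I would invoke Theorem~\ref{t:main} to write $D=S+N$ with $S$ semisimple, $N$ a nilpotent $\cK$-linear operator, and $[S,N]=0$. Since $N$ commutes with itself, the relation $[S,N]=0$ also yields $[D,N]=0$.

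Next I would pass to $W:=\Ker N$. As $N$ is $\cK$-linear, $W$ is a $\cK$-subspace of $V$, and it is nonzero whenever $V\neq 0$: if $N=0$ then $W=V$, while if $N\neq 0$ one picks the largest $j$ with $N^{j}\neq 0$ and a vector $v$ with $N^{j}v\neq 0$, so that $0\neq N^{j}v\in W$. From $[D,N]=0$ and $[S,N]=0$ it follows that $W$ is invariant under both $D$ and $S$ --- for instance $Nv=0$ implies $N(Sv)=S(Nv)=0$. Since the restriction of a semisimple differential operator to an invariant subspace is again semisimple (recall that semisimple means a direct sum of simple ones), $S|_{W}$ is semisimple; and $N$ vanishes on $W$, so $D|_{W}=S|_{W}$ is a semisimple differential operator on $W$.

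Finally I would apply Theorem~\ref{t:semisimple} to $D|_{W}$: after a suitable finite base change $\cK\hookrightarrow\cK_{b}$ the operator $(D|_{W})_{b}$ is diagonalisable, i.e.\ admits a presentation $d+A$ with $A$ a diagonal matrix over $\cK_{b}$. A standard basis vector $e_{i}$ of $W_{b}$ is then killed by $d$, so $(D|_{W})_{b}(e_{i})=A_{ii}e_{i}$, exhibiting $A_{ii}\in\cK_{b}$ as an eigenvalue. Because base change along a field extension is flat, $W_{b}$ embeds into $V_{b}$ and $(D|_{W})_{b}$ is the restriction of $D_{b}$ to $W_{b}$, so $e_{i}$ is an eigenvector of $D_{b}$ and we are done.

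I do not anticipate any genuine obstacle here: the real work is already contained in Theorems~\ref{t:semisimple} and~\ref{t:main}, and the only steps I would take care to spell out are that $\Ker N$ is stable under $D$ and $S$, and that the restriction of a semisimple differential operator to an invariant subspace is semisimple. An alternative route avoids $\Ker N$ altogether: base change so that $S_{b}=d+A$ with $A=\mathrm{diag}(a_{1},\dots,a_{n})$, let $M$ be the matrix of $N_{b}$ in that basis, and note that $[S_{b},N_{b}]=0$ unwinds to $d(M_{ij})=(a_{j}-a_{i})M_{ij}$, whence $M_{ij}\neq 0$ forces $a_{i}$ and $a_{j}$ to be similar in the sense of Definition~\ref{d:similar}. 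Restricting to the coordinate block attached to a single similarity class and gauging it so that $A$ becomes scalar turns $M$ into a constant nilpotent matrix, and any constant vector in its kernel is an eigenvector. The only nuisance along that route is the bookkeeping with similarity classes, which is why I would favour the $\Ker N$ argument.
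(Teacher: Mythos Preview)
Your argument is correct as a deduction of Corollary~\ref{c:main} from Theorems~\ref{t:semisimple} and~\ref{t:main}, and the paper itself notes that these two theorems ``imply immediately'' the corollary. However, the paper's actual proof of Corollary~\ref{c:main} (in \S\ref{s:proofs}) takes a genuinely different route and does \emph{not} invoke Theorems~\ref{t:semisimple} or~\ref{t:main} at all. Instead it uses Theorem~\ref{t:factorisation}: pick any nonzero $v\in V$, find a monic relation $D^n v + a_1 D^{n-1}v+\cdots+a_n v=0$, factor the corresponding differential polynomial $f(x)=x^n+a_1x^{n-1}+\cdots+a_n$ into linear factors $(x-\Lambda_1)\cdots(x-\Lambda_n)$ over some $\cK_b$, and then read off an eigenvector exactly as in the familiar proof that linear operators over algebraically closed fields have eigenvectors.

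The difference matters for the architecture of the paper. The whole point, stated in the introduction, is to answer Levelt's question by giving a \emph{direct} proof of Corollary~\ref{c:main} and then using it to establish Theorems~\ref{t:semisimple} and~\ref{t:main}; indeed, the paper's proofs of those theorems (via the generalised eigenspace decomposition) presuppose the existence of eigenvalues. So within the paper's logical flow your argument would be circular. What your approach buys is brevity once the structure theorems are in hand; what the paper's approach buys is an independent, elementary proof of the eigenvalue statement via the differential analogue of Puiseux's theorem, which then serves as the engine for everything else.
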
 

 Levelt asked for a direct proof of this corollary, noting that this would considerably simplify the proofs of the above theorems. Subsequently, several authors provided alternative approaches to these theorems cf. \cite{Wasow, Malgrange, Robba, BV, Praagman, SingerVanDerPut, Kedlaya}. One of our main goals is to provide an elementary proof of the fact that every differential operator has an eigenvalue and use it to provide a simple proof of the existence of Jordan decomposition, thus fulfilling Levelt's vision. 
  
 We now provide a brief summary of our approach. Let $\cK\{x\}$ denote the non-commutative ring of \emph{differential polynomials}. As an abelian group $\cK\{x\}=\cK[x]$ but multiplication is modified by the rule $xa=ax+da$ for all $a\in \cK$. Using a version of Hensel's lemma and Newton polygons, we prove: 

\begin{thm} \label{t:factorisation} 
Every non-constant differential polynomial in $\cK\{x\}$ has a linear factorisation over a finite extension of $\cK$. 
\end{thm}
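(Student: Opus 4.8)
The plan is to prove Theorem~\ref{t:factorisation} by reducing, via the Newton polygon, to the case of a differential polynomial with coefficients in $\cO=\bC[\![t]\!]$, and then invoking the differential Hensel's lemma established in \S\ref{s:factorisation}. Given $P(x) = \sum_{i=0}^{n} a_i x^i \in \cK\{x\}$, the first step is to clear denominators and rescale: after replacing $x$ by $t^r x$ for a suitable integer $r$ (which corresponds to a gauge-type substitution, using $x(t^r) = t^r x + r t^r$, so the twisted multiplication is respected), and after multiplying through by a power of $t$, one arranges that all coefficients lie in $\cO$ and that the Newton polygon touches both axes appropriately. A further base change $t \mapsto t^{1/b}$ lets us take the relevant slopes to be integral, so that the "shearing" substitutions are defined over $\cK_b\{x\}$. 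The upshot is that it suffices to show: a differential polynomial $P \in \cO\{x\}$ with (say) unit leading coefficient whose reduction $\bar P \in \bC[x]$ has a simple root $\bar\alpha$, or more generally factors as a product of coprime pieces, lifts to a corresponding factorisation of $P$ over $\cO$.

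The heart of the matter is the differential Hensel's lemma. I would set it up exactly parallel to the classical statement: if $P \in \cO\{x\}$ and $\bar P = \bar g \, \bar h$ in $\bC[x]$ with $\gcd(\bar g,\bar h)=1$, then $P = g h$ with $g,h \in \cO\{x\}$ lifting $\bar g, \bar h$. The proof is the usual Newton iteration: build $g \equiv g_0 \pmod t$, $h \equiv h_0 \pmod t$ and successively correct $g_k \mapsto g_k + t^k u_k$, $h_k \mapsto h_k + t^k v_k$, solving at each stage a congruence of the form $\bar g\, v_k + \bar h\, u_k \equiv (\text{known error term}) \pmod t$, which is solvable (with control on degrees) precisely because $\bar g$ and $\bar h$ are coprime in $\bC[x]$. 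The one genuinely new feature is bookkeeping the twisted multiplication $xa = ax + da$: because $da \in t\cO$ for $a \in \cO$ and we have set $d = t\,d/dt$, the commutator terms only contribute at higher order in $t$ and therefore do not disturb the leading-order linear-algebra problem — this is the observation that makes the differential case no harder than the classical one. Iterating all the standard factorisations down to degree one gives a complete linear factorisation of $P$ over $\cO$; cf.\ the footnote after Theorem~\ref{t:factorisation}.

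Assembling the pieces: starting from arbitrary $P \in \cK\{x\}$, pass to $\cK_b$ for $b$ divisible by the denominators of the Newton polygon slopes; on each edge of the Newton polygon, apply a shearing substitution $x \mapsto t^{s} x$ to bring that edge to slope zero and rescale to get coefficients in $\cO$ with a nonzero reduction; factor off the part of $\bar P$ supported on that edge using differential Hensel; and recurse on the factors, whose Newton polygons are strictly smaller. Since each application strictly decreases the degree in $x$, the process terminates with $n$ linear factors over a finite extension $\cK_b$ of $\cK$.

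The main obstacle I anticipate is not any single step in isolation but the interaction between the shearing/rescaling substitutions and the non-commutativity: one must check that $x \mapsto t^{s} x$ and $x \mapsto x + \lambda$ type substitutions are genuinely \emph{ring} maps of $\cK\{x\}$ (they are, since they are conjugation/gauge operations coming from the Remark~\ref{r:loopalgebra} picture), and that they transform the Newton polygon in the expected way so that an edge of slope $s$ really does become an edge of slope $0$ with the combinatorics of the classical case preserved. Keeping the degree bounds on $u_k, v_k$ uniform through the iteration — so that the limits $g = \lim g_k$, $h = \lim h_k$ are honest differential polynomials and not merely differential power series — is the other point that requires care, exactly as in the commutative proof.
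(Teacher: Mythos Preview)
Your overall architecture---Newton polygon reduction followed by a differential Hensel lemma---is exactly the paper's, but there is a genuine error in your Hensel step and a case you have not treated.

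\textbf{The Hensel hypothesis.} Your assertion that ``the commutator terms only contribute at higher order in $t$'' is false for the derivation $d=\delta_1=t\frac{d}{dt}$. One has $d(t^n)=nt^n$, so $x\,t^n=t^n(x+n)$ and more generally $g_0(x)\,t^n=t^n g_0(x+n)$: the shift is of the \emph{same} $t$-adic order, not higher. Hence at the $n$-th step of the iteration the congruence to be solved is
\[
g_0(x+n)\,q_n(x)+p_n(x)\,h_0(x)\equiv f_n(x)\pmod t,
\]
and solvability for every $n$ requires $\gcd\bigl(g_0(x+n),h_0(x)\bigr)=1$ for all $n\in\bZ_{>0}$, not merely $\gcd(g_0,h_0)=1$. (Take $\bar f=x(x-1)$ with $g_0=x-1$, $h_0=x$: the step $n=1$ already fails.) The paper's Proposition~\ref{p:Hensel} records precisely this shifted hypothesis, and Corollary~\ref{corregsingfactors} arranges it by always peeling off the linear factor whose root has smallest real part, so that the root of $g_0(x+n)$ can never coincide with a root of $h_0$. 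Your version of Hensel, as stated, does not go through.

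\textbf{The repeated-root case.} You also do not address what happens when, after the shearing $x\mapsto t^r X$, the reduction $\bar g$ is a pure power $(X+\lambda)^n$. Then Hensel yields no factorisation whatsoever, so the degree does not drop and your recursion stalls. The paper handles this separately: Corollary~\ref{correproots} shows that in this situation $\lambda\neq 0$ and the single Newton-polygon slope is necessarily an integer (so no field extension was needed after all), and Lemma~\ref{l:NPSlope} shows that the translation $x\mapsto x-\lambda t^r$ produces a new polynomial whose Newton polygon has a strictly smaller nonnegative integral slope. Iterating, one reaches in finitely many steps either a $\bar g$ with at least two distinct roots (so Hensel applies) or a polynomial in $\cO\{x,\delta_1\}$ (so Corollary~\ref{corregsingfactors} applies). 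This termination mechanism is absent from your outline.
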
 

The above result is established in \S \ref{s:factorisation}. 
Note that Malgrange \cite{Malgrange}  and Robba \cite{Robba}  also use Newton polygons and differential Hensel's lemma in their treatment of the Hukuhara-Levelt-Turrittin Theorem; however, our formulation and proof of Jordan decomposition is different from theirs; for instance, we do not use the cyclic vector lemma.\footnote{For the advantages and disadvantages of the cyclic vector lemma, cf. \cite[\S 5.7]{Kedlaya}.} 

In \S \ref{s:proofs}, we show that  Theorem \ref{t:semisimple} and Corollary \ref{c:main} follow easily from Theorem \ref{t:factorisation}, thus illustrating the analogies between linear and differential setting. Using these results, we obtain a generalised eigenspace decomposition for differential operators. In other words, we obtain that every differential operator has a representation $d+X$ where $X$ is a block-upper triangular matrix and each block has a unique (up to similarity) eigenvalue. At this point, we encounter a subtle difference between the linear and differential setting. Let us write $X=Y+Z$ where $Y$ is diagonal and $Z$ is strictly upper triangular. If we were considering  linear operators, then $Y$ would be the semisimple  and $Z$ the nilpotent part of $X$ and these two commute. In the differential setting, however, the situation is more subtle because the operators $d+Y$ and $Z$ do not necessarily commute.  In fact, these two operators commute if and only if the entries of $Z$ are complex numbers (i.e. have no powers of $t$). We prove that indeed we can arrange so that the entries of $Z$ are complex numbers by using Katz's classification of unipotent differential operators \cite{Katz}.

\subsection{Formal $G$-connections} 
 The above considerations have a natural generalisation to the setting of  algebraic groups. Let $G$ be a connected, semisimple, linear algebraic group over $\bC$ and let $\fg$ denote its Lie algebra.
 A \emph{formal $G$-connection} is an expression of the form 
 \[
 \nabla=d+A, \quad \quad A\in \fg(\cK):=\fg\otimes \cK.
 \]
 
 The group $G(\cK)$ acts on the space of connections by gauge transformation 
 \[
 g\cdot (d+A)=d+{\Ad}_g(A) + (dg)g^{-1}, \quad \quad g\in G(\cK), \quad A\in \fg(\cK). 
 \]
 One way to make sense of the expression $(dg)g^{-1}$ is to choose a faithful representation $\rho: G\ra \GL_n$ (e.g. the adjoint representation) and show that $d(\rho(g)).\rho(g)^{-1}$, a priori in $\gl_n(\cK)$, actually belongs to $\fg(\cK)$, and is independent of the chosen representation; see \cite{BV}*{\S 1.6}, \cite{frenkel}*{\S 1.2.4}, \cite{raskin}*{\S 1.12}.   

\subsubsection{Semisimple $G$-connections} To discuss Jordan decomposition, we first need a notion of semisimplicity for formal $G$-connections. 
Proposition \ref{p:semisimple} allows us to define such a notion:

\begin{defe} \label{d:semisimple} A $G$-connection $\nabla=d+A$, $A\in \fg(\cK)$, is called \emph{semisimple} if $d+\ad(A)$ is semisimple (as a  formal $\GL(\fg)$-connection). 
\end{defe}  

The above is analogous to the definition of ad-semisimplicity for elements in a semisimple Lie algebra, cf. \cite{humphreys}*{\S 5.4}. 
Next, let $H\subseteq G$ be a maximal (complex) torus and $\fh:=\mathrm{Lie}(H)$  the corresponding Cartan subalgebra. We then have an analogue of Theorem \ref{t:semisimple}:   

\begin{thm} \label{t:gsemisimple} A $G$-connection $\nabla=d+A$ is semisimple if and only if, after a finite base change $\cK'/\cK$, $\nabla$ is gauge equivalent to a connection of the form $d+X$ where $X\in \fh(\cK')$. 
\end{thm}

As far as we know this is the first time the above natural theorem has been formulated in the literature. We use properties of the differential Galois group to establish the above theorem; see \S \ref{s:GConnections}.

\subsubsection{Jordan decomposition} We are ready to state Jordan decomposition for formal $G$-connections. 

\begin{thm}[Jordan decomposition] \label{t:main2} 
Every $G$-connection $\nabla=d+A$ can be written as a sum $\nabla=S+N$, where $S$ is a semisimple $G$-connection, $N\in \fg(\cK)$ is a nilpotent element and $S$ and $N$ commute. Moreover, the pair $(S, N)$ is unique. 
\end{thm}

When we say $S$ and $N$ commute, we mean they commute as elements of the extended loop algebra $\hat{\g}=\fg(\cK)\oplus \bC d$, where the bracket is defined by 
\[
[(x\otimes p(t), \alpha.d), (y\otimes q(t), \beta.d)]:= [x\otimes p(t),y \otimes q(t)]+\alpha y\otimes d(q(t))-\beta x\otimes d(p(t)), 
\]
with $x,y\in \fg, \, \, p(t), q(t)\in \cK, \,\, \alpha, \beta\in \bC$.

Following a suggestion of Deligne, Babbit and Varadarajan proved an equivalent form of the above theorem in \cite{BV}. Their proof, which uses intrinsic properties of algebraic groups, is the only proof of this fundamental result available in the literature. In this note, we give an alternative proof which uses the adjoint representation and reduces the problem to the $\GL_n$-case. Our approach is thus similar to the standard proofs of (usual) Jordan decomposition for semisimple Lie algebras, cf. \cite{humphreys}. We refer the reader to \S \ref{s:GConnections} for details.

\subsection{Acknowledgment}  We thank Philip Boalch,  Peter McNamara, Daniel Sage, Ole Warnaar, and Sinead Wilson for helpful conversations. We are grateful to Claude Sabbah for sending copies of \cite{Malgrange} and \cite{Sabbah}. The material in this paper forms a part of SW's Master's Thesis. MK was supported by an ARC DECRA Fellowship.


\section{Factorisation of differential polynomials}\label{s:factorisation} The goal of this section is to prove Theorem \ref{t:factorisation}. This theorem should be thought of as a differential analogue of a classical theorem of Puiseux. As in Section \ref{s:intro}, we consider the differential field $\cK:=\Ct$ with derivation $d$. An important implication of Puiseux's theorem is that for every positive integer $b$, $\cK_b:=\bC(\!(t^{\frac{1}{b}})\!)$ is the unique extension of $\cK$ of degree $b$. The derivation $d$ extends canonically to a derivation $d_b$ on $\cK_b$. 

Let $R$ be a $\mathbb{C}$-algebra and $d:R\rightarrow R$ a derivation. We denote by $R\{x,d\}$ the ring of differential polynomials over $(R,d)$.  We will generally be interested in the cases $R=\cO:=\CT$ and $R=\cK:=\bC(\!(t)\!)$ with derivation of the form $\delta_m:=t^m\frac{d}{dt}$, for some positive integer $m$. According to \cite{Ore}, the ring $\cK\{x,\delta_m\}$ is a left and right principal ideal domain.

\subsection{Differential Hensel's Lemma} \label{s:DiffPoly} 

Let $f\in \cO\{x,\delta_m\}$ be a differential polynomial. We write $f\pmod{t^n}$ for the polynomial obtained by first moving all factors of $t$ to the left and then reducing the coefficients modulo $t^n$. We denote $f\pmod{t}$ by $\bar{f}$. Note that this is a polynomial in $\bC[x]$. Without loss of generality, we assume throughout that $\bar{f}\neq 0$.

Now suppose we have a factorisation of the form
	\[
	\bar{f}=g_0h_0, \quad \quad \, g_0,h_0\in \mathbb{C}[x].
	\]
	Our aim is to lift this to a factorisation of $f$ in $\cK\{x,\delta_m\}$. We think of the following result as a differential analogue of Hensel's lemma. 

\begin{prop}	\label{p:Hensel} Let $f\in \cO\{x,\delta_m\}$ and $\bar{f}=g_0h_0$ as above. Suppose that
\[
\begin{cases} 
\gcd\big(g_0(x+n),h_0(x)\big)=1, \quad \forall n\in \bZ_{> 0} & \textrm{if $m=1$} \\
\gcd\big(g_0(x),h_0(x)\big)=1 & \textrm{if $m>1$}.
\end{cases}  
\]
Then we have a factorisation $f=gh$ with $g,h\in \cO\{x,\delta_m\}$,	 $\operatorname{deg}(g)=\operatorname{deg}(g_0)$,
$\bar{g}=g_0$ and $\bar{h}=h_0$. 
\end{prop}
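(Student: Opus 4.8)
The plan is to lift the factorisation $\bar f = g_0 h_0$ one power of $t$ at a time, exactly as in the classical proof of Hensel's lemma, but keeping careful track of the non-commutativity in $\cO\{x,\delta_m\}$. Concretely, I would construct sequences of polynomials $g_k, h_k \in \cO\{x,\delta_m\}$ with $\deg g_k = \deg g_0$, $g_k \equiv g_0$ and $h_k \equiv h_0 \pmod t$, and
\[
f \equiv g_k h_k \pmod{t^{k+1}},
\]
starting from the given $g_0, h_0$ (viewed as differential polynomials with constant coefficients). Then, since the coefficients stabilise $t$-adically, the limits $g := \lim_k g_k$ and $h := \lim_k h_k$ live in $\cO\{x,\delta_m\}$ and satisfy $f = gh$ with the required degree and reduction properties. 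The heart of the matter is the inductive step: given $g_{k-1}, h_{k-1}$ with $f - g_{k-1}h_{k-1} = t^k r_k + O(t^{k+1})$ for some $r_k \in \bC[x]$ (after moving all $t$'s to the left), we seek corrections $g_k = g_{k-1} + t^k u$, $h_k = h_{k-1} + t^k v$ with $u, v \in \bC[x]$, $\deg u < \deg g_0$, solving
\[
g_0 v + u h_0 \equiv r_k - c_k \pmod{?},
\]
where $c_k$ is the contribution coming from the derivation $\delta_m$ acting across the factor $t^k$.

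The key computation — and the reason the hypothesis splits into two cases — is understanding how $\delta_m = t^m \tfrac{d}{dt}$ interacts with multiplication by $t^k$ and with moving $x$ past $t^k$. When we write $g_{k-1}h_{k-1} + t^k(g_0 v + u h_0)$ and reduce mod $t^{k+1}$, the only new subtlety beyond the commutative case is that in $\cO\{x,\delta_m\}$ we have $x\, t^k = t^k x + \delta_m(t^k) = t^k x + k\, t^{k+m-1}$. If $m > 1$, then $k + m - 1 > k$, so this correction term vanishes mod $t^{k+1}$ and the equation to solve is literally the classical Bézout equation $g_0 v + u h_0 = r_k$ in $\bC[x]$, solvable with $\deg u < \deg g_0$ precisely because $\gcd(g_0, h_0) = 1$. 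If $m = 1$, then $k + m - 1 = k$, so moving a power $x^j$ in $g_0$ past $t^k$ produces a shift; tracking this through, the factor $t^k$ effectively conjugates $h_0(x)$ to $h_0(x+k)$ (or $g_0$ to $g_0(x+k)$ — I'd need to check the direction carefully), so the equation becomes $g_0(x) v + u\, h_0(x+k) = r_k$ or similar, which is solvable for each $k \in \bZ_{>0}$ exactly under the stated hypothesis $\gcd(g_0(x+n), h_0(x)) = 1$ for all $n > 0$.

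I expect the main obstacle to be bookkeeping rather than conceptual: getting the non-commutative rewriting ``move all $t$'s to the left'' to interact correctly with the mod-$t^{k+1}$ truncation, and in particular pinning down the exact shift (by $+k$ versus $+(k-1)$, and on which factor) that appears in the $m=1$ case, so that it matches the hypothesis $\gcd(g_0(x+n), h_0(x)) = 1$ for all positive $n$. Once the correct recursive identity is isolated, solvability is immediate from coprimality and the degree bound $\deg u < \deg g_0$ forces uniqueness of $(u,v)$ at each stage (hence a well-defined sequence), and $t$-adic convergence gives the result. A secondary point to verify is that $\deg g_k$ stays equal to $\deg g_0$ throughout — this follows since each correction $t^k u$ has $x$-degree $< \deg g_0$ and $t$-order $\geq k \geq 1$, so it cannot change the leading behaviour of $g_k$.
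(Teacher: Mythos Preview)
Your plan is correct and essentially identical to the paper's proof: the paper also lifts the factorisation one power of $t$ at a time, using the commutation rule $h(x)\,t^i = t^i\,h(x+it^{m-1})$ to derive the recursive Bézout equation $f_n \equiv g_0(x+nt^{m-1})\,q_n(x) + p_n(x)\,h_0(x) \pmod t$, which for $m=1$ becomes $g_0(x+n)\,q_n + p_n\,h_0$ (resolving your uncertainty: the shift lands on $g_0$, not $h_0$), and then controls $\deg g_n$ via the division algorithm exactly as you outline.
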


We note that a version of this proposition appeared in \cite{Praagman}*{Lemma 1}. 

\begin{proof} 
First of all, in the differential polynomial ring $\cK\{x,\delta_m\}$, easy induction arguments show that 
\begin{equation} \label{lemtswap}
	h(x)t^i=t^ih(x+it^{m-1}) ,\quad \quad \forall h(x)\in \cK\{x,\delta_m\},\quad \forall i\in \bZ,
\end{equation}
and
\begin{equation}\label{lempowers} 
	(t^dx)^k=\sum_{j=0}^{k-1}a_jt^{kd+(m-1)j}x^{k-j}, \quad \quad \forall d\in \mathbb{Z}-\{0\}, \quad \forall k\in \mathbb{N},
\end{equation} 
for some constants $a_j\in \mathbb{C}$, $a_0=1$.

Our goal is to inductively build a sequence of polynomials
\begin{align}
\label{eqngn} g_n(x)&=g_0+tp_1+t^2p_2+\dots +t^{n-1}p_{n-1}+t^np_n, \quad \quad \, p_i\in \mathbb{C}[x] \\
\label{eqnhn} h_n(x)&=h_0+t q_1+t^2q_2+\dots +t^{n-1}q_{n-1}+t^nq_n, \quad \quad \, q_i\in \mathbb{C}[x],
\end{align}
which satisfy:
\begin{equation}
f\equiv g_n(x)h_n(x) \pmod{t^{n+1}}. \nonumber
\end{equation}
If we can do this, then by letting $n\to \infty$ we will obtain elements $g,h\in \cO\{x,\delta_m\}$ such that $f=gh$. 

Suppose that we know the $p_i$ and $q_i$ for $1\leq i \leq n-1$. In view of \eqref{eqngn} and \eqref{eqnhn} we have:
\[
g_n=g_{n-1}+t^np_n, \quad \quad h_n=h_{n-1}+t^nq_n.
\]
Requiring that $f\equiv g_n(x)h_n(x) \pmod{t^{n+1}}$ then gives us the following condition:
\begin{align}
f&\equiv g_n(x)h_n(x) \quad &&\pmod{t^{n+1}} \nonumber \\
&\equiv \big(g_{n-1}(x)+t^np_n(x)\big)\big(h_{n-1}(x)+t^nq_n(x)\big) \quad &&\pmod{t^{n+1}} \nonumber \\
&\equiv g_{n-1}(x)h_{n-1}(x)+g_{n-1}(x)t^nq_n(x)+t^np_n(x)h_{n-1}(x)+t^np_n(x)t^nq_n(x) \quad &&\pmod{t^{n+1}}. \nonumber
\end{align}
We need to shift the powers of $t$ to the left. By (\ref{lemtswap}), $g_{n-1}(x)t^n=t^ng_{n-1}\left(x+nt^{m-1}\right)$, so we have:
\begin{align}
f-g_{n-1}(x)h_{n-1}(x)&\equiv t^ng_{n-1}\big(x+nt^{m-1}\big)q_n(x)+t^np_n(x)h_{n-1}(x) \quad &&\pmod{t^{n+1}} \nonumber \\
&\equiv t^n\big(g_{n-1}(x+nt^{m-1})q_n(x)+p_n(x)h_{n-1}(x)\big) \quad &&\pmod{t^{n+1}}, \nonumber
\end{align}
and thus
\begin{align}
\frac{f-g_{n-1}(x)h_{n-1}(x)}{t^n}&\equiv g_{n-1}(x+nt^{m-1})q_n(x)+p_n(x)h_{n-1}(x) \quad &&\pmod{t} \nonumber \\
&\equiv g_0(x+nt^{m-1})q_n(x)+p_n(x)h_0(x) \quad &&\pmod{t}. \nonumber
\end{align}
For notational convenience, we set:
\begin{equation}
f_n=\frac{f-g_{n-1}(x)h_{n-1}(x)}{t^n}. \nonumber
\end{equation}
so that we have
\begin{equation}
\label{eqncondgnhn} f_n\equiv g_0(x+nt^{m-1})q_n(x)+p_n(x)h_0(x) \quad \pmod{t}.
\end{equation}

Now if $m>1$, then (\ref{eqncondgnhn}) reduces to
\[
f_n\equiv g_0(x)q_n(x)+p_n(x)h_0(x) \quad \pmod{t}.
\]
Since $\mathbb{C}[x]$ is a Euclidean domain, we will be able to solve this for $p_n$ and $q_n$ provided that $g_0$ and $h_0$ are coprime. On the other hand, if $m=1$, then (\ref{eqncondgnhn}) becomes
\[
f_n\equiv g_0(x+n)q_n(x)+p_n(x)h_0(x) \quad \pmod{t}.
\]
In this case, we will only be able to generate the entire sequence if $g_0(x+n)$ and $h_0(x)$ are coprime for all $n\in \mathbb{Z}_{>0}$. 

All that remains to show is that we can control the degree of the $g_n$'s. We will show this in the case $m=1$. The proof in the case $m>1$ is similar (replace $g_0(x+n)$ with $g_0(x)$ everywhere).  Since $g_0(x+n)$ and $h_0(x)$ are coprime, we can find $a,b\in \mathbb{C}[x]$ such that
	\begin{equation}
	g_0(x+n)a(x)+h_0(x)b(x)= 1. \nonumber
	\end{equation}
	Multiplying through by $f_n$ yields
	\begin{equation}
	\label{eqnfn2}
	g_0(x+n)a(x)f_n(x)+h_0(x)b(x)f_n(x)=f_n(x). 
	\end{equation}
	Using the division algorithm we can find unique $p_n$ and $q_n$ such that $\operatorname{deg}(p_n)<\operatorname{deg}(g_0)$. Write:
	\begin{equation}
	b(x)f_n(x)= Q(x)g_0(x)+R(x) \nonumber
	\end{equation}
	with $\operatorname{deg}(R)<\operatorname{deg}(g_0)$. Equation (\ref{eqnfn2}) then becomes:
	\begin{equation}
	g_0(x+n)\big(a(x)f_n(x)+Q(x)h_0(x)\big)+h_0(x)R(x)\equiv f_n(x) \quad \pmod{t}. \nonumber
	\end{equation}
	Setting $p_n= R$ and  $q_n= af_n+Qh_0$ gives us the required $g_n$ and $h_n$.
\end{proof}

\begin{cor}
	\label{corregsingfactors}
	Let $f\in \cO\{x,\delta_1 \}$ be a monic differential polynomial. Then $f$ admits a factorisation of the form
	\[
	(x-\Lambda)h,
	\]
	with $\Lambda\in \cO$ and $h\in \cO\{x, \delta_1\}$.
\end{cor}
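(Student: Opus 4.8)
The goal is to factor out a single linear factor $(x-\Lambda)$ from a monic $f\in\cO\{x,\delta_1\}$. The plan is to apply Proposition \ref{p:Hensel} with a carefully chosen factorisation of the reduction $\bar f\in\bC[x]$. First I would observe that $\bar f\in\bC[x]$, being a polynomial over the algebraically closed field $\bC$, splits into linear factors; so I may write $\bar f = (x-c)^k\,\bar h_0$ where $c\in\bC$, $k\ge 1$ is the multiplicity of the root $c$, and $\bar h_0(c)\neq 0$. The natural idea is to take $g_0 = (x-c)^k$ and $h_0 = \bar h_0$ and try to lift. However, the issue is that Proposition \ref{p:Hensel} in the case $m=1$ requires $\gcd\big(g_0(x+n),h_0(x)\big)=1$ for all $n\in\bZ_{>0}$, and $g_0(x+n) = (x+n-c)^k$ shares a root with $h_0$ precisely when $c-n$ is a root of $h_0$ — which can certainly happen, since the roots of $\bar f$ may differ by positive integers.

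The fix is to choose $c$ to be the root of $\bar f$ with \emph{largest real part} (or, among roots of maximal real part, any one of them; equivalently, one can use a shift trick). Indeed, after a change of variable $x\mapsto x+c$ — which corresponds to the gauge-type substitution replacing $f(x)$ by $f(x+c)$, still a monic element of $\cO\{x,\delta_1\}$ since $c\in\bC\subseteq\cO$ and conjugation/translation by constants preserves the ring — we may assume $c=0$, i.e. $\bar f = x^k\bar h_0$ with $\bar h_0(0)\ne 0$, and additionally that $0$ has maximal real part among the roots. Then for $n\in\bZ_{>0}$, the polynomial $g_0(x+n) = (x+n)^k$ has its only root at $x=-n$, which has real part $-n<0\le \operatorname{Re}(\text{any root of }\bar h_0)$... wait, that inequality goes the wrong way; so in fact I should pick $c$ with \emph{smallest} real part. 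Let me restate: choose $c$ to be a root of $\bar f$ of minimal real part, translate so $c=0$; then roots of $\bar h_0$ all have real part $\ge 0$, while $g_0(x+n)=(x+n)^k$ has root $-n$ with $\operatorname{Re}(-n) = -n < 0$, hence $g_0(x+n)$ and $h_0(x)$ are coprime for every $n>0$. So the hypothesis of Proposition \ref{p:Hensel} is satisfied.

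Applying Proposition \ref{p:Hensel} then yields a factorisation $f = g\,h$ with $g,h\in\cO\{x,\delta_1\}$, $\deg g = k$, $\bar g = x^k$, $\bar h = \bar h_0$, and $g$ monic (since $\deg g = \deg g_0$, $\bar g = g_0$ monic, and the leading coefficient of $g$ reduces to $1$; monicity can also be arranged directly in the induction as the degree bound keeps the top coefficient equal to that of $g_0$). Now if $k=1$ we are done: $g = x-\Lambda$ for some $\Lambda\in\cO$ with $\bar\Lambda = 0$, and $h\in\cO\{x,\delta_1\}$. If $k>1$, I would recurse: $g$ is a monic differential polynomial in $\cO\{x,\delta_1\}$ of degree $k$ with $\bar g = x^k$, so by induction on degree it has a factorisation $(x-\Lambda)g'$ with $\Lambda\in\cO$ and $g'\in\cO\{x,\delta_1\}$, whence $f = (x-\Lambda)(g'h)$ with $g'h\in\cO\{x,\delta_1\}$. (Undoing the initial translation $x\mapsto x+c$ at the end only shifts $\Lambda$ by $c$, which stays in $\cO$.)

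The main obstacle is precisely the coprimality condition $\gcd\big(g_0(x+n),h_0(x)\big)=1$ for all positive integers $n$ in the $m=1$ case of Hensel's lemma — this is the feature that distinguishes the differential setting from the classical one, and it is what forces the choice of the root $c$ of minimal real part rather than an arbitrary root. Everything else (splitting over $\bC$, the translation-invariance of $\cO\{x,\delta_1\}$ under $x\mapsto x+c$, the induction on degree, and tracking monicity) is routine. One should double-check the direction of the real-part inequality and the harmless case where $\bar h_0$ is constant (i.e. $\bar f = x^k$), in which case $h_0=1$, coprimality is automatic, and the lifting is immediate.
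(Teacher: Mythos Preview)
Your overall strategy --- choosing a root of $\bar f$ of minimal real part so as to satisfy the shifted coprimality condition in Proposition~\ref{p:Hensel} --- is exactly right, and matches the paper. However, there is a genuine gap in the step you flag as ``harmless'': the case where $\bar f=(x-c)^n$ has a single root of full multiplicity. There your choice $g_0=(x-c)^k=(x-c)^n$, $h_0=1$ makes Hensel's lemma produce a trivial factorisation (the lift $h$ is a unit in $\cO$, so $\deg g=n=\deg f$), and your induction on degree does not apply since $\deg g=\deg f$. The ``immediate lifting'' you allude to yields nothing.

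The fix is already implicit in your own argument, and is precisely what the paper does: take $g_0=x-\lambda_1$ \emph{linear} from the start, with $\lambda_1$ of minimal real part, regardless of its multiplicity. Then $h_0=(x-\lambda_2)\cdots(x-\lambda_n)$ may well contain further copies of $\lambda_1$, but the shifted coprimality still holds: $g_0(x+n)=x-\lambda_1+n$ has its only root at $\lambda_1-n$, and $\operatorname{Re}(\lambda_1-n)<\operatorname{Re}(\lambda_1)\le\operatorname{Re}(\lambda_j)$ for every $j$, so $\lambda_1-n$ is never a root of $h_0$. Proposition~\ref{p:Hensel} then hands you the linear factor $(x-\Lambda)$ directly, with no recursion needed. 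Your detour through $g_0=(x-c)^k$ and induction on degree is therefore unnecessary (and, as seen, incomplete), and the preliminary translation $x\mapsto x+c$ is likewise harmless but superfluous.
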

\begin{proof}
	Let $\bar{f}\in \mathbb{C}[x]$ be the reduction of $f$ mod $t$. Since $f$ is monic, $\bar{f}$ is non-constant and hence factors over $\mathbb{C}$ into linear factors:
	\begin{equation}
	\bar{f}=(x-\lambda_1)(x-\lambda_2)\cdots (x-\lambda_n), \quad \, \lambda_i\in \mathbb{C}. \nonumber
	\end{equation}
	Without loss of generality, we can order these factors so that $\operatorname{Re}(\lambda_1)\le \operatorname{Re}(\lambda_2) \le \cdots \le \operatorname{Re}(\lambda_n)$. With this ordering we then have
	\begin{equation}
	\bar{f}=g_0h_0, \nonumber
	\end{equation}
	where
	\begin{equation}
	g_0=x-\lambda_1, \quad \, h_0=(x-\lambda_2)\cdots (x-\lambda_n). \nonumber
	\end{equation}
	By our choice of ordering, $g_0(x+n)$ has no common factor with $h_0$ for all $n\in \mathbb{Z}_{>0}$. Hence we can apply Proposition \ref{p:Hensel} to obtain a factorisation of the form
	\[
	f=(x-\Lambda)h, \quad \, \Lambda\in \cO, h\in \cO\{x,\delta_1\}, 
	\]
	as required.
\end{proof}

\begin{rem} \label{r:factorisation} 
Note that the above result is false for the usual polynomial ring $\cO[x]$. Indeed, $x^2+t-t^2$ does not have a linear factorisation over this ring, but if we consider it as an element of $\cO\{x,\delta_1\}$, then $x^2+t-t^2=(x-t)(x+t)$. 
\end{rem}

\subsection{From power series to Laurent series via Newton polygons} \label{s:ChangeofVar} In the previous section, we settled linear factorisation for differential polynomials in $\cO\{x,\delta_1\}$. In this section, we explain how, by a change of variable, we can transform polynomials with coefficients in $\cK$ to those with power series coefficients. The price is that we have to go to a finite extension $\cK_q$ of $\cK$ and, more seriously, the derivation is not simply the canonical extension of $\delta_1$ to $\cK_q$. Nevertheless, we shall see that this change of variable allows us to factor elements of $\cK\{x,\delta_1\}$. Throughout we let $v_t(\cdot)$ denote the $t$-adic valuation on $\cK$.

\begin{lem} \label{lemcov2} 
Consider the monic differential polynomial $f(x)=x^n + \sum_{i=1}^n a_i x^{n-i}\in \cK\{x, \delta_1\}$. Let $
r:=\min\big\{\frac{v_t(a_i)}{i}\big\}$. Then $g(X)=t^{-nr}f(t^rX)$ is a monic differential polynomial with power series coefficients. 
\end{lem}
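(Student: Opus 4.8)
The plan is to realise the substitution $x\mapsto t^{r}X$ as a genuine change of variable between differential polynomial rings, and then to read off the valuations of the coefficients of $g$ directly from the defining property of $r$ as a minimum.

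First I would pass to the ramified extension $\cK_{q}$, where $q$ is the denominator of the rational number $r$, so that $t^{r}\in\cK_{q}$; and I would assume $f$ monic, the general case reducing to this on dividing by the leading coefficient. The decisive structural point is that the target ring of the substitution is forced on us: for the assignment $a\mapsto a$ $(a\in\cK_{q})$, $x\mapsto t^{r}X$ to extend to a ring homomorphism $\cK_{q}\{x,\delta_{1}\}\to\cK_{q}\{X,D\}$, the relation $xa=ax+\delta_{1}(a)$ forces $t^{r}X\cdot a=a\cdot t^{r}X+\delta_{1}(a)$, i.e. $D=t^{-r}\delta_{1}$ (up to the scalar $1/q$, this is $\delta_{1-qr}$ in the variable $s=t^{1/q}$). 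This is precisely the phenomenon flagged above --- the new derivation is \emph{not} the canonical extension of $\delta_{1}$ --- and henceforth $g(X):=t^{-nr}f(t^{r}X)$ is to be read inside $\cK_{q}\{X,D\}$.

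Next I would compute the powers $(t^{r}X)^{k}$ in $\cK_{q}\{X,D\}$. Just as for \eqref{lempowers}, an easy induction (using $D(t^{a})=a\,t^{a-r}$) gives, for $k\ge 1$,
\[
(t^{r}X)^{k}=\sum_{j=0}^{k-1}c^{(k)}_{j}\,t^{(k-j)r}\,X^{k-j},\qquad c^{(k)}_{j}\in\bC,\quad c^{(k)}_{0}=1,
\]
the salient feature being that the coefficient of $X^{m}$ carries \emph{exactly} the power $t^{mr}$, independently of $k$. Substituting this into $f=\sum_{i=0}^{n}a_{i}x^{n-i}$ and multiplying on the left by $t^{-nr}$, I can then read off that the coefficient of $X^{m}$ in $g$ equals $t^{(m-n)r}\sum_{i=0}^{n-m}a_{i}\,c^{(n-i)}_{n-i-m}$ for $1\le m\le n$, that the constant term is $t^{-nr}a_{n}$, and that the coefficient of $X^{n}$ is $a_{0}=1$; in particular $g$ is monic.

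It remains to do the valuation bookkeeping. The coefficient of $X^{m}$ has valuation at least $(m-n)r+\min_{0\le i\le n-m}v(a_{i})$, and the definition $r=\min_{i}v(a_{i})/i$ gives $v(a_{i})\ge ir$ for every $i$ (with $v(a_{0})=0$). The case that matters is $r\le 0$ --- if $r>0$ then, $f$ being monic, $f$ already has power series coefficients --- and there the minimum of $ir$ over $0\le i\le n-m$ equals $(n-m)r$, whence $(m-n)r+(n-m)r=0$; likewise $v(t^{-nr}a_{n})=v(a_{n})-nr\ge 0$ by minimality of $r$. Hence every coefficient of $g$ lies in $\bC[\![t^{1/q}]\!]$. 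The step I expect to require the most care is not any single inequality but keeping this noncommutative bookkeeping honest --- pinning down the forced derivation $D=t^{-r}\delta_{1}$, and tracking which power of $t$ accompanies each monomial $X^{m}$ after expanding $(t^{r}X)^{k}$; once that is settled, the conclusion is immediate from $r$ being a minimum.
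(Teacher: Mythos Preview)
Your argument is correct and is essentially the paper's own proof: pass to $\cK_q$, pin down the forced derivation on the target ring (the paper records it as $\frac{1}{q}s^{1-p}\frac{d}{ds}$ with $s=t^{1/q}$, which is exactly your $D=t^{-r}\delta_1$), expand $(t^rX)^k$ via the analogue of \eqref{lempowers}, and then bound valuations termwise using $v(a_i)\ge ir$ together with $r\le 0$. Your framing of the substitution as a ring homomorphism that forces $D$ is a bit more explicit than the paper's, but the computation and the valuation estimate are the same.

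One caveat worth flagging: both you and the paper tacitly work under $r\le 0$ (the paper says ``we may assume $r<0$''). Your remark that for $r>0$ the monic $f$ already has integral coefficients is true, but it does not establish the conclusion for $g$; in fact the lemma as literally stated fails for $r>0$ (e.g.\ $f=x^2+t$ gives $r=\tfrac12$ and $g=X^2+\tfrac12 t^{-1/2}X+1$). This is an artefact of the statement rather than a flaw in your reasoning, and is irrelevant for the intended application.
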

\begin{proof}
	To be more precise, write $r=\frac{p}{q}$ with $\gcd(p,q)=1$ and $q>0$. If $r\ge 0$, then each $v_t(a_i)\ge 0$ and so $f\in \cO\{x,\delta_1\}$. Since we have already dealt with this case in Corollary \ref{corregsingfactors}, we may assume that $r<0$. In order to make the change of variables $x=t^rX$, we require a field extension to $\cK_q=\bC(\!(t^{1/q})\!)$. Let $s:=t^{1/q}$ so that our change of variables becomes $x=s^pX$. Note that this change of variables means that the relation $xt=tx+t$ becomes $Xs^q=s^qX+s^{q-p}$. Hence differential polynomials in $X$ lie in the ring $\cK_q\{X,\frac{1}{q}s^{1-p}\frac{d}{ds}\}$ (note this new derivation sends $s^q$ to $s^{q-p}$). 
	
	Applying (\ref{lempowers}) to $f(s^pX)$ yields $f(s^pX)=s^{np}g(X)$ where 
	\[
	g(X)=s^{-np}a_n + \sum_{k=0}^{n-1}a_k\sum_{j=0}^{n-1-k}m_{n-k,j}s^{(-j-k)p}X^{n-1-k-j}, \quad a_0=1.
	\]
	
	Let $v_s(\cdot)$ denote the $s$-adic valuation on $\cK_q$. Since $v_s(a_i)=qv_t(a_i)$,	$v_t(a_i)\ge \frac{ip}{q}$ implies that $v_s(a_i)\ge ip$.
	Thus, for $0\le l \le n-1$, the coefficient, $b_l$, of $X^{n-l}$ in $g$ satisfies
	\[
	v_s(b_l)=\min_{0\le k \le l}\{v_s(a_ks^{-lp})\}\ge \min_{0\le k \le l}\{kp-lp\}=0,
	\]	
	where the last equality follows since $p<0$. 
	
	It is clear that $v_s(b_l)$ will be $0$ exactly when $v_s(a_l)=lp$, that is, if, and only if, $v_t(a_l)=lr$. For the ``constant'' term of $g$ we have
	\[
	v_s(b_n)=v_s(a_ns^{-np}) \ge np-np=0, \nonumber
\]
	again with equality exactly when $v_t(a_n)=nr$. Thus
	\[
	g(X)=X^n+b_1X^{n-1}+\dots + b_n, \quad \quad b_i\in \CS,
	\]
    with $\min(v_s(b_i))=0$. Furthermore, $v_s(b_i)=0$ if, and only if, $v_t(a_i)=ir$. This shows that $g(X)\in \CS\{X,\frac{1}{q}s^{1-p}\frac{d}{ds}\}$.
\end{proof}

Consider $g(x)$ from the above lemma. If $\bar{g}(x)$ has two distinct roots, then Hensel's lemma allows us to factor it. We now study the opposite extreme, i.e., when all roots of $\bar{g}(x)$ are equal. It will be helpful to use the notion of Newton polygons for differential polynomials, cf. \cite[\S 6.4]{Kedlaya}. Throughout the rest of this section, we will assume that $r<0$, unless explicitly stated otherwise.

\begin{defe}[Newton Polygon]
Let $f\in \cK\{x, \delta_m\}$ be a differential polynomial and write
\[
f(x)=\sum_{i=0}^n a_ix^{n-i}, \quad \quad a_i\in \cK.
\] 
Consider the lower boundary of the convex hull of the points
\[
\{(n-i),v_t(a_i):0\le i \le n\}\subset \mathbb{R}^2.
\]
The Newton polygon of $f$, denoted $NP(f)$, is obtained from this boundary by replacing all line segments of slope less than $1-m$ with a single line segment of slope exactly $1-m$. 
\end{defe}

Lemma \ref{lemcov2} now has the following corollary.

\begin{cor} \label{correproots}
	Let $f$ and $g$ be as in Lemma \ref{lemcov2} and suppose that $\bar{g}:=g \pmod{s}=(X+\lambda)^n$, $\lambda\in \mathbb{C}$. Then $\lambda$ is non-zero and the Newton polygon of $f$ has a single \emph{integral} slope.
\end{cor}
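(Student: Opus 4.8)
The plan is to extract the extra information contained in the \emph{proof} of Lemma~\ref{lemcov2} rather than merely its statement. Write $g(X)=X^n+b_1X^{n-1}+\dots+b_n$ with $b_i\in\CS$ as there, and recall the two facts established in that proof (we are in the case $r<0$, $r=p/q$ with $\gcd(p,q)=1$, $q>0$): namely $\min_i v_s(b_i)=0$, and, for each $i$, $v_s(b_i)=0$ if and only if $v_t(a_i)=ir$. Reducing modulo $s$, the coefficient of $X^{n-i}$ in $\bar{g}$ is the constant term of $b_i$, and under the hypothesis $\bar{g}=(X+\lambda)^n$ it equals $\binom{n}{i}\lambda^i$.

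First I would prove $\lambda\neq 0$: since $\min_i v_s(b_i)=0$, some $b_{i_0}$ with $i_0\geq 1$ has nonzero constant term, i.e. $\binom{n}{i_0}\lambda^{i_0}\neq 0$, which forces $\lambda\neq 0$. Knowing $\lambda\neq 0$, every coefficient $\binom{n}{i}\lambda^i$ is nonzero, so in fact $v_s(b_i)=0$ for \emph{all} $0\leq i\leq n$, whence $v_t(a_i)=ir$ for every $i$; in particular each $a_i$ is nonzero. Taking $i=1$ then gives $r=v_t(a_1)\in\bZ$, so $r$ is an integer (equivalently $q=1$).

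It then remains only to read off the Newton polygon. The points over which $NP(f)$ is built are $(n-i,\,v_t(a_i))=(n-i,\,ir)$ for $0\leq i\leq n$, and these are collinear, all lying on the line through $(n,0)$ of slope $-r$. Hence the lower boundary of their convex hull is the single segment joining $(0,nr)$ to $(n,0)$, of slope $-r$; since $-r\geq 0=1-m$ (here $m=1$), this segment is not altered in passing to $NP(f)$. Thus $NP(f)$ is a single segment, whose slope $-r$ is an integer by the previous paragraph.

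I do not anticipate a genuine obstacle: everything is a bookkeeping consequence of Lemma~\ref{lemcov2} once one notices that the hypothesis $\bar{g}=(X+\lambda)^n$ with $\lambda\neq 0$ forces \emph{all} of the $b_i$ to be $s$-adic units, thereby pinning down $v_t(a_i)$ for every $i$ simultaneously. The one point needing a moment's care is the integrality of $r$, which comes down to observing that the $X^{n-1}$-coefficient $n\lambda$ of $\bar{g}$ is nonzero, so that $v_t(a_1)$ is compelled to equal $r$ and is automatically an integer; the mild trap is to remember to reach back into the proof of Lemma~\ref{lemcov2} for the equivalence $v_s(b_i)=0\iff v_t(a_i)=ir$, which is not recorded in its statement.
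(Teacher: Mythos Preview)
Your proof is correct and follows essentially the same route as the paper's: both extract from the proof of Lemma~\ref{lemcov2} that $\min_i v_s(b_i)=0$ and the equivalence $v_s(b_i)=0\iff v_t(a_i)=ir$, use the first to force $\lambda\neq 0$, expand $(X+\lambda)^n$ to conclude $v_s(b_i)=0$ for all $i$, and then read off $r=v_t(a_1)\in\bZ$ and the single-slope Newton polygon. Your write-up is more explicit about the binomial coefficients and about checking the segment against the $1-m$ threshold, but the argument is identical (note only that the paper records the slope as $r$ rather than your $-r$, a harmless sign-convention discrepancy).
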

\begin{proof}
	As in Lemma \ref{lemcov2}, write
	\[
	  g(X)=X^n+b_1X^{n-1}+\dots+ b_n, \quad \quad b_i\in \CS.
	\]
	Since $\min\{v_s(b_i)\}=0$, $\lambda\ne 0$. Now since, $\lambda\ne 0$, expanding the bracket $(X+\lambda)^n$ shows that $v_s(b_i)=0$ for all $i$ and hence $v_t(a_i)=ir$. Thus, the Newton polygon of $f$ has a single slope of $-r$ and since $v_t(a_1)=r$, $r$ is an integer.
\end{proof}

For future use, we also record the following lemma. 
\begin{lem} \label{l:NPSlope}
	Let $f$ and $g$ be as in Lemma \ref{lemcov2} and suppose that $\bar{g}=(X+\lambda)^n$, $\lambda\in \mathbb{C}$. Then the slopes of the Newton polygon of $f(x-\lambda t^r)$ are all strictly smaller than the slope of the Newton polygon of $f(x)$. 
\end{lem}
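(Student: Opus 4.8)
The idea is to track how the valuations of the coefficients change under the substitution $x\mapsto x-\lambda t^r$, using the information extracted in the proof of Lemma \ref{lemcov2} and Corollary \ref{correproots}. Recall from Corollary \ref{correproots} that $r$ is an integer, that $v_t(a_i)=ir$ for all $i$ (so the Newton polygon of $f$ is a single segment of slope $r$), and that the leading terms of $g(X)=s^{-np}f(s^pX)$ in each degree assemble to $(X+\lambda)^n$. Equivalently, writing $a_i=\alpha_i t^{ir}+(\text{higher order in }t)$ with $\alpha_i\in\mathbb C$, one has $\sum_i \alpha_i X^{n-i}=(X+\lambda)^n$. First I would substitute $x=X+\lambda t^r$ into $f$ and reorganise; since $d=\delta_1=t\frac{d}{dt}$, one must be careful that $\delta_1$ does not annihilate $\lambda t^r$ — but $r$ is an integer, so $\delta_1(\lambda t^r)=r\lambda t^r$, and the commutation relations \eqref{lemtswap} and \eqref{lempowers} let one expand $(x+\lambda t^r)^{n-i}$ cleanly with all factors of $t$ moved left.

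The key computation is then that, after the shift, the coefficient of $x^{n-i}$ in $f(x-\lambda t^r)$ — call it $a_i'$ — satisfies $v_t(a_i')>ir$ for every $i\ge 1$, with $a_0'=a_0$ unchanged (the leading coefficient). Indeed, the "principal part" of $f$ along the slope-$r$ edge, obtained by keeping only the $\alpha_i t^{ir}$ terms, transforms under $x\mapsto x-\lambda t^r$ exactly as the polynomial $(X+\lambda)^n$ transforms under $X\mapsto X-\lambda$, namely into $X^n$; so all the slope-$r$ contributions to the lower-degree coefficients cancel, and what survives in $a_i'$ for $i\ge 1$ comes from strictly higher-order terms of the $a_j$ or from the derivative terms produced by $\delta_1$ acting through the Leibniz rule, all of which carry $t$-valuation strictly greater than $ir$. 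This forces every vertex $(n-i,v_t(a_i'))$ of the Newton polygon of $f(x-\lambda t^r)$ with $i\ge1$ to lie strictly above the line through $(n,0)$ of slope $r$, while the vertex at $i=0$ is unchanged; hence the Newton polygon of $f(x-\lambda t^r)$ has a single slope (after the slope-$(1-m)$ truncation with $m=1$, which is vacuous here since all slopes are $\ge r$ could a priori be negative — but the single surviving edge still has a well-defined slope) and that slope is strictly less than $r$.

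The main obstacle I anticipate is bookkeeping the derivative contributions: applying $\delta_1$ via the product rule to $(x+\lambda t^r)^{n-i}$ produces, besides the naive binomial terms, extra terms of the shape $t^{r}\cdot(\text{lower power of }x)$ coming from $\delta_1(t^r)=rt^r$, and one must check these too have $t$-valuation strictly above the critical line — which they do, precisely because each such term trades one factor of $(x+\lambda t^r)$, contributing valuation $\ge r$ on the nose, for a factor $t^r$ of the same valuation but lands in a coefficient slot where $ir$ with larger $i$ is demanded, OR it simply reproduces a term already accounted for. Making this "every new term is strictly above the slope-$r$ line" claim precise, uniformly over all $i$, is the crux; once it is in hand the conclusion about Newton polygons is immediate from the definition. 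A clean way to package it is to observe that $f(x-\lambda t^r)\equiv t^{nr}\,\tilde f\big(t^{-r}x\big)$ modulo terms of higher $t$-order, where $\tilde f(X)=(X+\lambda)^n\mapsto X^n$ under $X\mapsto X-\lambda$, so the leading behaviour of $f(x-\lambda t^r)$ along slope $r$ is just $x^n$, killing all but the top vertex.
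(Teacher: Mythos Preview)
Your plan is correct and follows essentially the same route as the paper's proof. The paper's execution is slightly cleaner than the direct coefficient-tracking you outline: rather than expanding $\sum_i a_i(x-\lambda t^r)^{n-i}$ and chasing the derivative contributions term by term, it writes $g=(X+\lambda)^n+e_1(X+\lambda)^{n-1}+\cdots+e_n$ with $v(e_i)>0$, substitutes $X=t^{-r}x$ to recover $f(x)$, and then observes that the shift $x\mapsto x-\lambda t^r$ replaces each $(t^{-r}x+\lambda)$ by $t^{-r}x$, giving $f(x-\lambda t^r)=t^{nr}\bigl((t^{-r}x)^n+e_1(t^{-r}x)^{n-1}+\cdots+e_n\bigr)$; a single application of \eqref{lempowers} then shows every lower-degree coefficient has strictly larger valuation than before. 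This is exactly the ``clean packaging'' you sketch in your last sentence, and it dispatches the derivative bookkeeping in one stroke rather than requiring the case analysis you worried about.
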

\begin{proof}
By Corollary \ref{correproots}, $r$ is an integer and hence no extension of $\cK$ is necessary. Since $\bar{g}=(X+\lambda)^n$, we can write $g$ as
\[
g=(X+\lambda)^n+e_1(X+\lambda)^{n-1}+\dots+e_n, \quad \quad e_i\in \cO,
\]
with $v_t(e_i)>0$ for all $i$. Now
\begin{align}
f(t^rX)&=t^{nr}\big((X+\lambda)^n+e_1(X+\lambda)^{n-1}+\dots+e_n\big) \nonumber \\
\implies f(x)&=t^{nr}\big((t^{-r}x+\lambda)^n+e_1(t^{-r}x+\lambda)^{n-1}+\dots+e_n\big),\nonumber
\end{align}
and hence
\[
f(x-\lambda t^r)=t^{nr}\big((t^{-r}x)^n+e_1(t^{-r}x)^{n-1}+\dots+e_n\big).
\]
Applying (\ref{lempowers}), we have, for $m_{k,l}\in \mathbb{C}$,
\begin{align}
f(x-\lambda t^r)&=t^{nr}\Big(t^{-nr}\sum_{j=0}^{n-1}m_{n,j}x^{n-j}+e_1t^{-(n-1)r}\sum_{j=0}^{n-2}m_{n-1,j}x^{n-1-j}+\dots+e_n\Big) \nonumber \\
&=\sum_{j=0}^{n-1}m_{n,j}x^{n-j}+e_1t^{r}\sum_{j=0}^{n-2}m_{n-1,j}x^{n-1-j}+\dots+t^{nr}e_n. \nonumber
\end{align}
Since $v(e_i)>0$, the valuation of the coefficient of $x^{n-j}$ in $f(x-\lambda t^r)$ is strictly greater than the corresponding coefficient in $f(x)$. This means that the slopes of the Newton polygon for $f(x-\lambda t^r)$ are strictly less than the slope of the Newton polygon for $f(x)$.
\end{proof}

\begin{exam}
In order to illustrate Corollary \ref{correproots} and Lemma \ref{l:NPSlope}, consider the differential polynomial
\[
f_1(x)=x^2+(4t^{-2}+2t^{-1}+2)x+(4t^{-4}+4t^{-3}+t^{-2}+t^{-1}+1).
\]
In this case, $r=-2$ and the change of variables $x=t^{-2}X$ yields
\[
g_1(X)=X^2+(4+2t)X+(4+4t+t^2+t^3+t^4),
\]
and so $\bar{g}_1(X)=(X+2)^2$. The figure below shows that the Newton polygon of $f_1$ has only a single slope of $-2$ (cf. Cor \ref{correproots}). Making the translation $x\mapsto x+2t^{-2}$ as in Lemma \ref{l:NPSlope} yields the new polynomial 
\[
f_2(x)=x^2+(2t^{-1}+2)x+(t^{-2}+t^{-1}+1).
\]
This has a single slope $r=-1$ and a final translation $x\mapsto x-t^{-1}$ yields $f_3(x)=x^2+2x+1$. This can easily be factorised and reversing the change of variables yields the full factorisation $f_1(x)=(x+2t^{-2}+t^{-1}+1)^2$.
\[
\begin{tikzpicture}[dot/.style={circle,fill=black,minimum size=4pt,inner sep=0pt,
	outer sep=-1pt}]

\draw[->] (0,-5) -- (0,2) node [above] {$y$};
\draw[->] (-4,0) -- (5,0) node [right] {$x$};

\foreach \pos/\label in {-4/$-2$,-2/$-1$,
	2/$1$,4/$2$}
\draw (\pos,0) -- (\pos,-0.1) (\pos cm,-3ex) node
[anchor=base,fill=white,inner sep=1pt]  {\label};

\foreach \pos/\label in {-4/$-4$,-3/$-3$,-2/$-2$,-1/$-1$,
	1/$1$}
\draw (0,\pos) -- (-0.1,\pos) (-1ex,\pos cm) node
[left,fill=white,inner sep=1pt]  {\label};

\node[dot] at (4,0) {};
\node[dot] at (2,-2) {};
\node[dot] at (0,-4) {};
\draw[line width=1pt] (4,0) -- (0,-4) (2.3cm, -2cm) node[right,fill=white,inner sep=1pt]{NP($f_1$)};

\node[dot] at (2,-1) {};
\node[dot] at (0,-2) {};
\draw[line width=1pt] (4,0) -- (0,-2) (1.4cm, -1cm) node[left,fill=white,inner sep=1pt]{NP($f_2$)};

\node[dot] at (2,0) {};
\node[dot] at (0,0) {};
\draw[line width=1pt] (4,0) -- (0,0) (1.4cm, 0.2cm) node[above,fill=white,inner sep=1pt]{NP($f_3$)};
\end{tikzpicture}
\]
\end{exam}

\subsection{Proof of Theorem \ref{t:factorisation}} \label{s:Theorem1Proof}
Write $f(x)=x^n+a_{1}x^{n-1}+\cdots+a_n  \in \cK\{x,\delta_1\}$ and let $r:=\min\big\{\frac{v_t(a_i)}{i}\big\}\in \bQ$. If $r\geq 0$, then the result follows from the differential Hensel's Lemma (see Corollary \ref{corregsingfactors}) so we may assume $r<0$. Let us write 
\[
r=\frac{p}{q}, \quad \quad q>0, \quad \gcd(p,q)=1. 
\]
Consider the transformation $x\mapsto t^{r} X$. Under this transformation the differential field $(\cK, \delta_1)$ changes to $\big(\cK_q, \frac{1}{q}s^{1-p}\frac{d}{ds}\big)$ where $s:=t^{1/q}$. Moreover, we obtain a monic differential polynomial  $g(X)\in \CS\{y,\frac{1}{q}s^{1-p}\frac{d}{ds}\}$. Let $\bar{g}(X)$ denote the reduction of $g(X)$ modulo the maximal ideal of $\CS$. If $\bar{g}(X)$ has two distinct roots, then we can again apply Proposition \ref{p:Hensel}  to reduce the problem to a polynomial of degree strictly less than $f$. Thus, we are reduced to the case that $\bar{g}(X)$ has a unique repeated root $\lambda$. For inductive purposes, we rename $f$ to $f_1$. In this case, by Corollary \ref{correproots}, $\lambda\neq 0$ and the Newton polygon of $f_1$ has a single \emph{integral} slope. Now we make the transformation $x\mapsto x-\lambda t^r$. As shown in Lemma \ref{l:NPSlope}, under this transformation $f_1$ is mapped to a polynomial $f_2$ whose Newton polygon has slopes strictly less than that of $f_1$. Note that this transformation does not change the differential field.

Now we start the process with the polynomial $f_2(x):=x^n+b_1x^{n-1}+\cdots +b_n\in \cK\{x,\delta_1\}$; i.e., we let $r_2:=\min\big\{\frac{v(b_i)}{i}\big\}$. If $r_2\geq 0$ we are done. Otherwise, we make the change of variable $x\mapsto t^{r_2} X$ to obtain a new polynomial $g_2(X)$. If $\bar{g}_2(X)$ has distinct roots, then we are done; otherwise, applying Corollary \ref{correproots} again, we conclude that the Newton polygon of $f_2$ has a single \emph{integral} slope. Since the slope of $f_2$ is a nonnegative integer strictly less than slope of $f_1$, this process must stop in finitely many steps at which point we have a factorisation of our polynomial.  

\qed


\section{Formal differential operators} \label{s:proofs} Recall that for each positive integer $b$,  $\cK_b$ denotes the unique finite extension of $\cK$ of degree $b$. 
Given a differential operator $D$, one has a canonical differential operator 
\[
D\otimes_\cK \cK_b: V\otimes_\cK \cK_b \ra  V\otimes_\cK \cK_b
\]
called the base change of $D$ to $\cK_b$. All base changes considered in this article are of this form. 
Henceforth, we will use the notation $V_b:=V\otimes_\cK \cK_b$ and $D_b=D\otimes_\cK \cK_b$. 

\subsection{Proof of Corollary \ref{c:main} (Every differential operator has an eigenvalue)}
 The argument proceeds exactly as in the linear setting. Let $D:V\ra V$ be a differential operator and $v\in V$ be a non-zero vector. Consider the sequence $v, D(v), D^2(v),\cdots$. As $V$ has finite dimension over $\cK$, we must have that 
\[
D^n(v) + a_1D^{n-1}(v)+\dots + a_{n-1}D(v)+a_nv=0,\quad \quad a_i\in \cK,
\]
where $n=\dim_\cK(V)$. Now consider the polynomial $f(x)=x^n+a_1x^{n-1}+\dots + a_n$ in the twisted polynomial ring $\cK\{x\}$. After a finite extension, we can write 
\[
f(x)=(x-\Lambda_1)\cdots (x-\Lambda_n)\in \cK_b\{x\}, \quad \Lambda_i\in \cK_b, b\in \bZ_{>0}.
\]
Thus,
\[
(D_b-\Lambda_1)\cdots (D_b-\Lambda_n)v=0.
\]
Let $i\in \{1,2,\cdots, n\}$ be the largest number such that $(D_b-\Lambda_i)\cdots (D_b-\Lambda_n)v=0$. If $i=n$ , then $v$ is an eigenvector of $D_b$ with eigenvalue $\Lambda_n$. Otherwise $(D_b-\Lambda_{i+1})\cdots (D_b-\Lambda_n)v$ is an eigenvector of $D_b$ with eigenvalue $\Lambda_i$.
\qed

\subsection{Proof of Theorem \ref{t:semisimple} (Semisimple operators are diagonalisable)} \label{s:proofoftsemisimple}
We need the following lemma. The proof is an easy argument using the Galois  group $\Gal(\cK_b/\cK)$; see \cite[\S 1(e)]{Levelt} for details. 
\begin{lem} \label{l:semisimple} $D$ is semisimple if and only if $D_b$ is. 
\end{lem}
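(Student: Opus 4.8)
The plan is to prove that $D$ is semisimple if and only if $D_b$ is, where $D_b$ denotes the base change along the Galois extension $\cK_b/\cK$. I would first set up the descent framework: let $\Gamma := \Gal(\cK_b/\cK)$, a cyclic group of order $b$, acting on $\cK_b$ by $\cK$-algebra automorphisms commuting suitably with the derivation $d_b$ (precisely, $\gamma \circ d_b = d_b \circ \gamma$ for all $\gamma \in \Gamma$, since $d_b$ is the \emph{canonical} extension of $d$). This $\Gamma$-action induces a semilinear action on $V_b = V \otimes_\cK \cK_b$ via $\gamma(v \otimes a) = v \otimes \gamma(a)$, and because $d_b$ commutes with $\Gamma$ one checks that $\gamma \circ D_b = D_b \circ \gamma$ as additive maps on $V_b$. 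The key classical input is Galois descent: the functor $W \mapsto W \otimes_\cK \cK_b$ is an equivalence between $\cK$-subspaces of $V$ and $\Gamma$-stable $\cK_b$-subspaces of $V_b$, with inverse $U \mapsto U^\Gamma$.

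Next I would handle the easy direction. Suppose $D$ is semisimple, and let $U \subseteq V_b$ be any $D_b$-invariant $\cK_b$-subspace; I want a $D_b$-invariant complement. The subtlety is that $U$ need not be $\Gamma$-stable, so I cannot directly descend it. Instead I would use the standard trick: consider $U' := \sum_{\gamma \in \Gamma} \gamma(U)$ and $U'' := \bigcap_{\gamma \in \Gamma} \gamma(U)$; both are $\Gamma$-stable (and still $D_b$-invariant, since $D_b$ commutes with each $\gamma$), hence descend to $D$-invariant subspaces $W', W''$ of $V$ with $W' \otimes \cK_b = U'$, $W'' \otimes \cK_b = U''$, and $W'' \subseteq W'$. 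By semisimplicity of $D$ applied inside $W'$, the subspace $W''$ has a $D$-invariant complement in $W'$, and more work along these lines — essentially reducing to the fact that a module over the (noncommutative but) principal-ideal-domain-like structure $\cK_b\{x,\delta\}$ is semisimple once it is after descent — yields the complement for $U$. Actually the cleanest route: since semisimplicity of a differential operator is equivalent to the associated $\cK\{x\}$-module being a semisimple module, and $\cK_b\{x\}$ is faithfully flat over $\cK\{x\}$ (or one argues directly with socles), one shows $\mathrm{soc}(V_b) = \mathrm{soc}(V) \otimes_\cK \cK_b$, whence $V$ semisimple $\iff V = \mathrm{soc}(V) \iff V_b = \mathrm{soc}(V_b) \iff V_b$ semisimple.

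For the converse, suppose $D_b$ is semisimple; I want $D$ semisimple. Let $W \subseteq V$ be $D$-invariant. Then $W \otimes_\cK \cK_b$ is a $D_b$-invariant subspace of $V_b$, so it has a $D_b$-invariant complement $U$. Now replace $U$ by its "average" $\tilde U := \bigcap_{\gamma} \gamma(U)$ or by an isotypic-projection argument to produce a $\Gamma$-stable $D_b$-invariant complement — here one needs that the projection $V_b \to W\otimes\cK_b$ along $U$ can be averaged over $\Gamma$ (divide by $|\Gamma| = b$, which is invertible in $\bC$) to get a $\Gamma$-equivariant, $D_b$-equivariant projection onto $W \otimes \cK_b$; its kernel is a $\Gamma$-stable $D_b$-invariant complement, which descends to a $D$-invariant complement of $W$ in $V$. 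This averaging step is the heart of the matter.

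The main obstacle, and the only real content beyond bookkeeping, is precisely this descent of complements: a $D_b$-invariant complement in $V_b$ need not be Galois-stable, so one must symmetrize it without destroying either $\Gamma$-invariance or $D_b$-invariance. The cleanest formulation is via the averaged projector argument using that $|\Gamma|$ is invertible in the base field $\bC$; once that is in place, Galois descent (the equivalence $U \mapsto U^\Gamma$ between $\Gamma$-stable subspaces of $V_b$ and subspaces of $V$, which also respects the differential structure since $D_b$ restricts to $D$ on $V = V_b^\Gamma$) finishes both directions uniformly. I expect the write-up to be short precisely because, as the authors note, it is "an easy argument using the Galois group" and is already spelled out in Levelt's paper.
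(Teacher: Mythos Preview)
The paper does not actually prove this lemma: it only remarks that ``the proof is an easy argument using the Galois group $\Gal(\cK_b/\cK)$'' and refers to Levelt for details. Your proposal is precisely such a Galois-descent argument, so you are on the intended track, and your treatment of the direction $D_b$ semisimple $\Rightarrow$ $D$ semisimple is correct: the projector onto $W\otimes_\cK \cK_b$ along a $D_b$-invariant complement commutes with $D_b$, each conjugate $\gamma\,\pi\,\gamma^{-1}$ still commutes with $D_b$ (because $\gamma D_b = D_b \gamma$), and the average $\tfrac{1}{b}\sum_\gamma \gamma\,\pi\,\gamma^{-1}$ is a $\Gamma$-equivariant, $\cK_b$-linear, $D_b$-commuting projection whose kernel descends.

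The forward direction in your write-up is where there is some looseness. Forming $\sum_\gamma \gamma(U)$ and $\bigcap_\gamma \gamma(U)$ gives $\Gamma$-stable $D_b$-submodules that descend, but knowing that $W''$ has a $D$-invariant complement inside $W'$ does not by itself produce a $D_b$-invariant complement of the original (non-$\Gamma$-stable) $U$; your ``more work along these lines'' hides the actual argument. Likewise the socle identity $\mathrm{soc}(V_b)=\mathrm{soc}(V)\otimes_\cK \cK_b$ is exactly the statement to be proved in the case $V$ is simple, so invoking it is circular unless you supply a separate justification. One clean fix: view $(V_b,D_b)$ as a differential module over $\cK$ via restriction of scalars; since $\cK_b=\bigoplus_{j=0}^{b-1}\cK\cdot t^{j/b}$ with $d_b(t^{j/b})=(j/b)t^{j/b}$, this restriction is isomorphic to $\bigoplus_{j=0}^{b-1}(V,\,D+j/b)$, which is semisimple over $\cK$ whenever $(V,D)$ is. Given a $D_b$-invariant $\cK_b$-subspace $U\subseteq V_b$, you then have a $\cK$-linear $D_b$-commuting projection onto $U$, and averaging it via $\tfrac{1}{b}\sum_{j=0}^{b-1} s^{-j}\pi s^{j}$ (with $s=t^{1/b}$) yields a $\cK_b$-linear $D_b$-commuting projection, completing the argument in the same spirit as your other direction.
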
 

Now we are ready to prove Theorem \ref{t:semisimple}. 
Suppose $D$ is semisimple. 
We prove by induction on $\dim(V)$ that, after an appropriate base change, it is diagonalisable. If $\dim(V)=1$ the result is obvious, so assume $\dim(V)>1$. Without loss of generality, assume $D$ has an eigenvector $v$ (if not, do an appropriate base change; by the previous lemma, the operator remains semisimple). Let $U=\Span_{\cK}\{v\}$. Then $U$ is a one-dimensional, $D$-invariant subspace\footnote{Indeed, if $Dv=\lambda v$, $\lambda \in \cK$, and $a\in \cK$ then $D(av)=(a\lambda+d(a))v\in U$.} of $V$; thus, there exists a $D$-invariant complement $W$. Now $D:W\to W$ is semisimple so by our induction hypothesis (after an appropriate base change), we can write $W$ as a direct sum of one-dimensional subspaces. Thus, after an appropriate base change, we have a decomposition of our vector space into one-dimensional, invariant subspaces and so $D$ is diagonalisable.  

Conversely, suppose $D_b$ is a diagonalisable operator. Then clearly $D_b$ is semisimple and thus, by Lemma \ref{l:semisimple}, so is $D$. \qed

\subsection{Invariant Properties of Differential Operators} \label{s:diffops}
The goal of this section is to prove Proposition \ref{p:semisimple}. To this end, we need to establish some properties of differential operators.

\subsubsection{Invariant Subspaces}

\begin{lem} \label{l:invariance} Let $D:V\to V$ be a differential operator with Jordan decomposition $D=S+N$. Suppose that $W\subset V$ is a $D$-invariant subspace. Then $W$ is also $S$-invariant. 
\end{lem}

\begin{proof}
Note that $V$ decomposes into generalised eigenspaces and that these generalised eigenspaces are $D$, $S$ and $N$ invariant \cite{Levelt}*{\S 4}. Hence we need only consider the case where $V$ itself is a generalised eigenspace. In this case, there exists a finite extension, $\cK_b$, of $\cK$ such that $S=d+\lambda I$ for some $\lambda\in \cK_b$. 
We first prove the result in the case of unipotent differential operators (i.e. in the case $\lambda=0$). As in Section \ref{s:unipotent}, we denote by $\cU$ the category of unipotent differential operators. Recall this category is equivalent to the category $\Nilp$ whose objects are pairs $(V_0,N)$ where $V_0$ is a $\bC$-vector space and $N$ is a nilpotent linear operator.

The restriction $D|_W:W\to W$ gives us a monomorphism in the category $\cU$. Under the equivalence $F$ we obtain a monomorphism in $\Nilp$.
Hence, there is a basis of $V$ for which we can write $D=d+N$. Since $(W_0,N')\hookrightarrow (V_0,N)$, in this basis we have $dW=W$. That is, $W$ is $S$-invariant. 

This result clearly extends to differential operators with a unique (up to similarity) eigenvalue.

For the general case, recall that after a finite extension to $\cK_b$, we can write $D=S+N$ where $S$ is diagonalisable. Now $W_b$ is a $D_b$-invariant subspace of $V_b$ and so by the above, $W_b$ is also $S_b$-invariant. If $W$ were not $S$-invariant, then $W_b$ would not be $S_b$-invariant, hence $W$ must be $S$-invariant. 
\end{proof}

\subsubsection{Adjoint differential operator} Let $V$ be a finite dimensional vector space over $\cK$.
Given a differential operator $d+A:V\ra V$, we write $d+S+N$ for its Jordan decomposition. Note that $S$ is not necessarily a semisimple linear operator on $V$; rather, $d+S$ is a semisimple differential operator on $V$.

\begin{lem} \label{l:jordan} Let $d+A:V\ra V$ be a differential operator, where $A\in \gl(V)$. Then $d+\ad S + \ad N$ is the Jordan decomposition of $d+\ad A$. 
\end{lem} 

\begin{proof}There exists a finite extension $\cK_b$ of $\cK$ such that we can pick a basis for $V\otimes \cK_b$ to put $d+A$ in Jordan normal form. In this case, $S$ is diagonal and $N$ is a constant nilpotent matrix with $1$'s or $0$'s on the super-diagonal, and $S$ and $N$ commute. Thus, $d+\ad(S)$ is a semisimple differential operator on $\gl(V\otimes \cK_b)$. We claim that it commutes with $\ad(N)$. Indeed, 
\[
[d+\ad(S), \ad(N)]=[d, \ad(N)]+[\ad(S), \ad(N)],
\]
where the bracket is for the extended Lie algebra $\widehat{\gl(\g)}$. Now $\ad(N)$ is constant, so the first bracket is zero. Since $S$ and $N$ commute, the second bracket is also zero. 
\end{proof} 

\subsubsection{Proof of Proposition \ref{p:semisimple}}
 If $d+A$ is semisimple, then we have seen that so is $d+\ad(A)$. If $d+A$ is not semisimple, then suppose $d+S+N$ is its Jordan decomposition. By assumption, $N\neq 0$. This implies that $\ad N$ is not trivial. Thus, $d+\ad A$ is not semisimple.

\subsection{Generalised eigenspace decomposition} \label{ss:eigenspace} 
Let $D:V\ra V$ be a formal differential operator and let $a\in \cK$. 

\begin{defe} \label{d:geneigenspace}  The generalised eigenspace $V(a)$ of $D$ is defined as
	\[
	V(a): =\Span_{\cK} \{ v\in V\, | \, (D-a)^nv=0, \quad \textrm{for some positive integer $n$.}\}
	\]
\end{defe} 

The goal of this section is to prove the following theorem. 

\begin{thm}[Generalised eigenspace decomposition] For some finite extension $\cK_b$ of $\cK$ there exists a canonical decomposition $V_b=\bigoplus_{i} V_b(a_i)$. Moreover, 
\[
V_b(a_i)\cap V_b(a_j)\neq \{0\} \iff \textrm{$a_i$ is  similar to $a_j$}\iff V_b(a_i)=V_b(a_j).
\]
\label{t:eigenspace}
\end{thm}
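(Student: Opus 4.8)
The plan is to build the decomposition by induction on $\dim V$, using Corollary \ref{c:main} and the factorisation result (Theorem \ref{t:factorisation}) as the engine, exactly as one does in linear algebra but tracking the twist by $d$.

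\textbf{Step 1: Reduce to a single eigenvalue and split off a generalised eigenspace.} First I would apply Corollary \ref{c:main} to pass to a finite extension $\cK_b$ so that $D_b$ has an eigenvalue $a_1$; I may assume from the outset that all base changes needed below have been performed at once (there are finitely many, since each step of the induction costs one extension and the process terminates). Now I want to show $V_b = V_b(a_1) \oplus V_b'$ for a $D_b$-invariant complement $V_b'$. The cleanest way is to use the characteristic polynomial / minimal polynomial approach adapted to $\cK\{x\}$: for $v \in V_b$, the vectors $v, D_bv, D_b^2 v, \dots$ are $\cK_b$-linearly dependent, giving a monic $f_v \in \cK_b\{x\}$ with $f_v(D_b)v = 0$; taking an lcm (available since $\cK_b\{x,\delta_1\}$ is a principal ideal domain, as noted via \cite{Ore}) produces a monic $f \in \cK_b\{x\}$ with $f(D_b) = 0$ on $V_b$. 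By Theorem \ref{t:factorisation}, after a further base change $f$ factors into linear terms $\prod (x - \Lambda_j)$. The subtlety versus the linear case is that $x - a$ and $x - b$ do \emph{not} commute in $\cK\{x\}$ and are ``coprime'' only when $a,b$ are non-similar; but this is exactly the coprimality hypothesis appearing in Proposition \ref{p:Hensel} ($g_0(x+n)$ vs $h_0(x)$), and it is what makes a partial-fraction / Bezout argument go through to produce idempotents cutting out the $V_b(a_i)$.

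\textbf{Step 2: Group the factors by similarity class and iterate.} Collect the $\Lambda_j$ into similarity classes; write $f = f_1 \cdot f'$ where $f_1$ has all its linear factors similar to $a_1$ and $f'$ has none. The point to check is that $f_1$ and $f'$ generate the unit ideal in $\cK_b\{x\}/(f)$ — equivalently, one can solve $f_1 u + f' v = 1$ — which follows from repeated use of the $m=1$ coprimality in Proposition \ref{p:Hensel} applied to non-similar linear factors (two linear polynomials $x - c$, $x - c'$ with $c - c'$ not of the form $e^{-1}d(e)$ are coprime in the relevant sense). This yields $V_b = \Ker f_1(D_b) \oplus \Ker f'(D_b)$, both $D_b$-invariant, with $\Ker f_1(D_b) = V_b(a_1)$. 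Applying the induction hypothesis to $D_b|_{\Ker f'(D_b)}$ gives $V_b = \bigoplus_i V_b(a_i)$ with the $a_i$ pairwise non-similar.

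\textbf{Step 3: The similarity dichotomy.} For the displayed equivalences: if $a_i$ is similar to $a_j$, then \eqref{eq:eigenvalues} (and its evident extension to generalised eigenvectors: if $Dv = (a + c^{-1}d(c))v + \text{lower}$ then rescaling by $c$ conjugates $D - a$ to $D - b$ up to the nilpotent part) shows $V_b(a_i) = V_b(a_j)$ on the nose — I would spell out that $v \mapsto cv$ gives an isomorphism intertwining $(D-a)$ and $(D - b)$. Conversely, if $a_i$ is not similar to $a_j$ then the construction above already placed them in different summands, so $V_b(a_i) \cap V_b(a_j) = \{0\}$; more directly, a common generalised eigenvector would give, after peeling off powers of $D - a_i$ and $D - a_j$, an honest eigenvector with two non-similar eigenvalues, contradicting that distinct eigenvalues of the same operator are forced to be similar (from \eqref{eq:eigenvalues}, or: $(D - a)v = 0 = (D-b)v$ with $v \neq 0$ forces $a = b$). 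Canonicity of the decomposition is automatic since each $V_b(a)$ is defined intrinsically in Definition \ref{d:geneigenspace}.

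\textbf{Main obstacle.} The genuinely non-classical point, and the one I would spend the most care on, is Step 2: establishing that differential-linear factors attached to \emph{non-similar} eigenvalues are coprime in $\cK_b\{x\}$ in the strong sense needed to split kernels, while factors attached to \emph{similar} eigenvalues are not. Getting the right notion of coprimality here — and seeing that it is precisely the hypothesis of Proposition \ref{p:Hensel} with $m = 1$ and the integer shifts $n$ reflecting that $d(t^n) = n t^n$ — is the crux; the rest is the standard induction. A secondary bookkeeping point is confirming that only finitely many base changes are needed, which follows because the degree $n = \dim V$ bounds the number of inductive steps and each uses a single finite extension, so one can take the compositum once at the start.
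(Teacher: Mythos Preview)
Your Step 2 contains a genuine gap, and the appeal to Proposition \ref{p:Hensel} does not close it. Two separate issues arise. First, writing $f = f_1 \cdot f'$ with the similarity classes segregated presumes you can reorder the linear factors in the noncommutative ring $\cK_b\{x\}$ while keeping each factor in its similarity class; but swapping $(x-a)(x-b)$ to $(x-b')(x-a')$ forces $a'+b'=a+b$ and $a'b'-d(a')=ab-d(b)$, and requiring in addition $a'\sim a$, $b'\sim b$ amounts to solving a Riccati equation for a logarithmic derivative over $\cK_b$, which you have not shown is always possible. Second, even granting such a grouping, a one-sided Bezout identity $uf_1+vf'=1$ in a noncommutative PID does not yield $V=\ker f_1(D)\oplus\ker f'(D)$: from $w=u(D)f_1(D)w+v(D)f'(D)w$ you cannot place the first summand in $\ker f'(D)$, since $f'(D)u(D)f_1(D)$ has no reason to vanish when $f=f_1f'$ rather than $f'f_1$. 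Proposition \ref{p:Hensel} is about lifting a factorisation $\bar f=g_0h_0$ from $\bC[x]$ to $\cO\{x\}$; its coprimality hypothesis concerns the reductions $g_0(x+n),h_0(x)$ in $\bC[x]$ and produces a \emph{factorisation}, not a Bezout identity in $\cK\{x\}$, so it is the wrong tool here. (Also, its integer-shift condition matches similarity only for roots in $\cO$, whereas your $\Lambda_j$ lie in $\cK_b$.)

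The paper sidesteps these noncommutative-algebra difficulties entirely. Its induction peels off a one-dimensional eigenline $U\subset V$, decomposes $W=V/U$ by hypothesis, and then splits the extension $0\to U\to V\to W\to 0$ summand by summand using $\Ext^1_{\cKx}(U,W(a_i))=0$ whenever the eigenvalue on $U$ is not similar to $a_i$. That vanishing is obtained from the identification $\Ext^1(V,V')\cong H^1(V^*\otimes V')$ together with Malgrange's index equality $\dim_\bC H^0=\dim_\bC H^1$ for formal differential modules (Proposition \ref{prop:ranknull}) --- a cohomological input your argument never invokes and which is precisely the replacement for the splitting step you are missing. Your Step 3 is essentially fine once the decomposition is built, though the ``more direct'' argument there is not: peeling off powers of $D-a_i$ and $D-a_j$ from a common vector produces two \emph{different} eigenvectors, not a single vector carrying two non-similar eigenvalues; the construction-based version is what actually works.
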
 
 
Before proving this theorem, we need to recall some facts about differential operators. Let $D:V\ra V$ be a differential operator. Define 
\[
H^0(V):=\ker(D), 
\]
\[
 H^1(V):=V/D(V).
\]
Note that these are vector spaces over $\bC$ (not over $\cK$). The following proposition due to Malgrange \cite[Theorem 3.3]{Malgrange2} is an analogue of the rank-nullity theorem for formal differential operators. 

\begin{prop}\label{prop:ranknull}
	Let $D:V\to V$ be a formal differential operator. Then 
	\[
	\dim_{\bC}H^0(V)=\dim_{\bC}H^1(V).
	\]
\end{prop}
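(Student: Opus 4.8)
The plan is to prove this by reducing to the case where $D$ is already in a nice normal form, using the structural results established earlier in the paper. First I would observe that both $\dim_\bC H^0$ and $\dim_\bC H^1$ are unchanged under gauge transformation, since a gauge transformation is simply a change of basis for $V$, and hence induces isomorphisms $\ker(D) \cong \ker(g^{-1}Dg)$ and $V/D(V) \cong V/(g^{-1}Dg)(V)$. Next I would check that both quantities are additive in short exact sequences of differential operators: if $0 \to V' \to V \to V'' \to 0$ is a $D$-equivariant sequence, the long exact sequence in cohomology reads $0 \to H^0(V') \to H^0(V) \to H^0(V'') \to H^1(V') \to H^1(V) \to H^1(V'') \to 0$ (the sequence terminates because $H^1(V'') = V''/D(V'')$ is the last nonzero term), so the alternating sum of dimensions vanishes, giving $\chi(V) := \dim_\bC H^0(V) - \dim_\bC H^1(V) = \chi(V') + \chi(V'')$. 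Hence it suffices to prove $\chi(V) = 0$ when $V$ is one-dimensional, i.e. $D = d + a$ acting on $\cK$ for some $a \in \cK$ — because by Theorem \ref{t:factorisation} (or equivalently Corollary \ref{c:main} applied repeatedly) every differential operator admits, possibly after a base change, a filtration with one-dimensional quotients, and one must separately check that the invariant $\chi$ is insensitive to the base change $\cK \to \cK_b$ (for instance via the Galois descent argument already invoked for Theorem \ref{t:semisimple}, since $H^i(V_b) \cong H^i(V) \otimes_\bC \bar\bC$... more carefully, $H^i(V_b)^{\Gal(\cK_b/\cK)} \cong H^i(V)$ and taking invariants preserves dimension).

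The heart of the matter is then the rank-one computation: for $D = d + a: \cK \to \cK$ with $d = t\frac{d}{dt}$, show $\dim_\bC \ker(d+a) = \dim_\bC \operatorname{coker}(d+a)$. I would split into cases according to the "type" of $a$. If $a$ is similar to $0$ — i.e. $a = c^{-1}d(c)$ for some $c \in \cK^\times$ — then conjugating by $c$ reduces to $D = d$, and here $\ker(d) = \bC$ (the constants, since $d(t^k) = k t^k$) while $d(\cK) = \{$series with zero $t^0$-coefficient$\}$ has codimension exactly $1$, so both sides equal $1$. If $a$ is not similar to $0$, one shows both $\ker(d+a) = 0$ and $\operatorname{coker}(d+a) = 0$, i.e. $d+a$ is bijective on $\cK$: surjectivity/injectivity can be checked by the standard term-by-term recursion on Laurent coefficients, where solvability of $d(f) + af = h$ fails to be automatic only when $a$ has a particular resonance (a nonzero integer residue), and that resonance is precisely the obstruction measured by similarity to $0$. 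A clean way to package this: write $a = \alpha_{-N}t^{-N} + \cdots$; if the lowest term has negative order the operator $d+a$ is easily seen to be bijective by valuation estimates, and if $a \in \cO$ one reduces by a similarity transformation to the case $a = \alpha \in \bC$ constant, where $d + \alpha$ kills $t^{-\alpha}$ exactly when $\alpha \in \bZ$ and is otherwise invertible — matching kernel and cokernel dimensions in every case.

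The main obstacle I anticipate is the bookkeeping in the rank-one case, specifically being careful about which $a \in \cK$ are similar to a constant and tracking the cokernel (not just the kernel) of $d + \alpha$ for $\alpha \in \bC$: when $\alpha$ is a negative integer the map $d+\alpha$ on $\cK$ has both a one-dimensional kernel ($\bC \cdot t^{-\alpha}$) and, one must verify, a one-dimensional cokernel, and the symmetry between these two facts is what makes $\chi = 0$; getting the cokernel computation right — rather than hand-waving it — is the delicate point. A secondary technical point is justifying that the additivity argument applies: one needs that every $V$ admits (after base change) a full flag of $D$-invariant subspaces, which follows by iterating Corollary \ref{c:main}, and that passing to $\cK_b$ and back does not alter $\chi$, for which the Galois-invariance observation must be stated cleanly. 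Alternatively, if one prefers to avoid the base-change subtlety entirely, one can run the additivity/dévissage argument directly over $\cK$ using the factorisation into one-dimensional pieces over $\cK_b$ together with the fact that $\chi(V_b) = [\cK_b : \cK] \cdot \chi(V)$ is false in general — so the Galois-invariant formulation $\dim_\bC H^i(V) = \dim_\bC H^i(V_b)^{\Gal}$ really is the right tool, and I would make sure to prove that equality before invoking it.
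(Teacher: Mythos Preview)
First, note that the paper does not actually prove this proposition: it is quoted with a citation to Malgrange, so there is no ``paper's proof'' to compare against. Assessing your proposal on its own, the additivity of $\chi$ via the snake lemma and the rank-one computation are both fine, and the identification $H^i(V)\cong H^i(V_b)^{G}$ for $G=\Gal(\cK_b/\cK)$ is correct. The gap is in what you deduce from that identification.

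Decompose $V_b$ into $G$-eigenspaces: as a differential module over $\cK$ one has $V_b\cong\bigoplus_{j=0}^{b-1}(V,\,D+\tfrac{j}{b})$, the $j$-th summand being the isotypic piece for the $j$-th character of $G$, and hence $H^i(V_b)\cong\bigoplus_{j}H^i(V,\,D+\tfrac{j}{b})$ with the same $G$-grading. Your d\'evissage over $\cK_b$ proves $\dim_{\bC}H^0(V_b)=\dim_{\bC}H^1(V_b)$, which therefore gives only $\sum_{j}\chi(V,\,D+\tfrac{j}{b})=0$, not the desired $\chi(V,D)=0$. In other words, knowing that two $G$-modules have the same $\bC$-dimension does not force their $G$-invariants to have the same dimension; for that you would need $H^0(V_b)\cong H^1(V_b)$ \emph{as $G$-representations}, and the long exact sequence attached to your flag in $V_b$ cannot deliver this because the flag is not $G$-stable (a $G$-stable full flag would descend to a $D$-invariant full flag of $V$ over $\cK$, which fails whenever $V$ has an irreducible $D$-submodule of $\cK$-dimension greater than one). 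Malgrange's argument sidesteps the whole issue: it computes the index of $D$ directly from a lattice in $V$, with no d\'evissage to rank one and no base change.
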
 

Next, recall that 
the dual differential operator $D:V\ra V$ is the operator $D^*$  on the vector space $V^*=\Hom_{\cK}(V,\cK)$ defined by 
\[
D^*:V^*\to V^*, \quad D^*(f)=d\circ f - f \circ D,\quad \quad f\in V^*.
\]

Let $D:V\ra V$ and $D':V'\ra V'$ be differential operators. Then, we can define a differential operator $D\otimes D'$ on $V\otimes V'$ by 
	\[
	(D\otimes D')(v\otimes v'):=D(v)\otimes v'+v\otimes D'(v').
	\]

The set of all of $\cK\{x\}$-linear maps from $V$ to $V'$ is denoted $\Hom_{\cKx}(V,V')$. This is a $\bC$-vector space. 
The Yoneda extension group $\Ext_{\cKx}^1(V,V')$ consists of equivalence classes of extensions of $\cK\{x\}$-modules 
\[
0 \to V \to V'' \to V' \to 0
\]
As usual, two extensions are equivalent if there exists a $\cKx$-linear isomorphism between them inducing the identity on $V$ and $V'$.

\begin{prop} Let $D:V\ra V$ and $D':V'\ra V'$ be two formal differential operators. Then, we have 
\begin{enumerate} 
\item[(i)]
$ \dim_\bC \Ext_{\cK\{x\}}^1(V,V')=\dim_\bC \HH^0(V^*\otimes V')$.
\item[(ii)] If no eigenvalue of $D$ is similar to an eigenvalue of $D'$, then $\Ext_{\cKx}^1(V,V')=0$.  
\end{enumerate} 
\end{prop}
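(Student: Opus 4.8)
The plan is to prove (i) first by identifying $\Ext^1_{\cKx}(V,V')$ with $\HH^1(V^*\otimes V')$ and then invoking Malgrange's rank-nullity result (Proposition~\ref{prop:ranknull}). The standard homological interpretation of $\Ext^1$ over a (left) PID like $\cKx$ (recall that $\cK\{x,\delta_1\}$ is a principal ideal domain by \cite{Ore}) says that $\Ext^1_{\cKx}(V,V')$ is computed by the complex $\Hom_{\cKx}(V,V') \to \Hom_{\cK}(V,V')$ induced by the $\cKx$-action, whose cokernel is $\Ext^1$ and whose kernel is $\Hom_{\cKx}(V,V')$. Concretely, $\Hom_{\cK}(V,V') = V^*\otimes_\cK V'$ carries the differential operator $D^*\otimes D'$ (which, unwound, is exactly $\phi \mapsto D'\circ\phi - \phi\circ D$), its kernel is $\Hom_{\cKx}(V,V') = \HH^0(V^*\otimes V')$, and its cokernel is $\HH^1(V^*\otimes V')$. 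So first I would carefully set up this identification $\Ext^1_{\cKx}(V,V') \cong \HH^1(V^*\otimes V')$, checking that the connecting-homomorphism description of $\Ext^1$ matches the cokernel $V'' /$-image description (an extension $0\to V'\to V''\to V\to 0$ of $\cKx$-modules splits as $\cK$-modules, and the obstruction to splitting $\cKx$-linearly is precisely a class in the cokernel of $D'\circ(-) - (-)\circ D$). Then (i) follows immediately by combining this with Proposition~\ref{prop:ranknull} applied to the operator $D^*\otimes D'$ on $V^*\otimes V'$: $\dim_\bC \HH^1(V^*\otimes V') = \dim_\bC \HH^0(V^*\otimes V')$.

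For (ii), by part (i) it suffices to show $\HH^0(V^*\otimes V') = 0$, i.e. that $D^*\otimes D'$ has no nonzero kernel, equivalently that there is no nonzero $\cKx$-linear map $V\to V'$. Here I would argue that a nonzero element of $\HH^0(V^*\otimes V')$ — a nonzero $\cKx$-linear $\phi:V\to V'$ — would force $D$ and $D'$ to share a common eigenvalue up to similarity. The cleanest route: pass to a finite extension $\cK_b$ where, by Corollary~\ref{c:main} / the eigenvalue existence argument, we may find an eigenvector $v\in V_b$ with $Dv = av$; restricting $\phi$ suitably and using that $\phi$ intertwines $D$ and $D'$, the image $\phi(v)$ (if nonzero) satisfies $D'(\phi(v)) = a\,\phi(v)$, so $a$ is an eigenvalue of $D'_b$; chasing through the generalised-eigenspace filtration of $V$ one produces such a $v$ with $\phi(v)\neq 0$ whenever $\phi\neq 0$. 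Since base change only enlarges the eigenvalue sets in a controlled way (eigenvalues of $D_b$ restrict to eigenvalues of $D$ in the appropriate sense), this contradicts the hypothesis that no eigenvalue of $D$ is similar to one of $D'$. Note also the basic fact that $\phi$ being $\cKx$-linear means $D'\circ\phi = \phi\circ D$, so $\ker\phi$ and $\operatorname{im}\phi$ are $D$- resp. $D'$-invariant, which makes these reductions to cyclic/eigen-sub-objects legitimate.

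The main obstacle I expect is (ii), specifically making the passage "$\phi\neq 0 \Rightarrow$ some eigenvector is not killed by $\phi$" fully rigorous while tracking what "similar" means across the base change $\cK_b/\cK$ — one needs to know that if $a\in\cK_b$ is an eigenvalue of $D_b$ and $a'\in\cK_b$ an eigenvalue of $D'_b$ with $a$ similar to $a'$ over $\cK_b$, then (some Galois translate of) this already reflects a similarity of eigenvalues of $D$ and $D'$ over $\cK$; this is where the genuine content lies. Part (i) is essentially formal once the $\Ext^1 \cong \HH^1$ identification is pinned down, and that identification is routine homological algebra over a PID — the only mild care needed is checking the sign conventions in $D^*\otimes D'$ against the definition of the dual operator and the tensor-product operator given above. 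For (ii) an alternative to the eigenvector chase, should it prove awkward, is to use part (i) in reverse together with a direct induction on $\dim V + \dim V'$ via short exact sequences $0\to V_1\to V\to V_2\to 0$ (which exist since $\cKx$ is a PID and $D$-invariant subspaces correspond to submodules), reducing to the case where $V$ and $V'$ are simple and then invoking Theorem~\ref{t:factorisation} to put both in a near-diagonal form with a single eigenvalue class apiece; the non-similarity hypothesis then makes the intertwiner equation $D'\phi = \phi D$ visibly have only the zero solution.
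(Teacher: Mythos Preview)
Your approach to (i) is exactly the paper's: identify $\Ext^1_{\cKx}(V,V')\simeq \HH^1(V^*\otimes V')$ (the paper simply cites \cite{Kedlaya}*{Lemma 5.3.3} rather than spelling out the PID argument you sketch) and then apply Proposition~\ref{prop:ranknull}.

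For (ii) the paper's argument is shorter than your eigenvector chase: it observes that the eigenvalues of $D^*\otimes D'$ are, up to similarity, of the form $-a+a'$ with $a,a'$ eigenvalues of $D,D'$; the hypothesis says none of these is similar to $0$, hence $\ker(D^*\otimes D')=\HH^0(V^*\otimes V')=0$, and (i) finishes. Your argument via an intertwiner $\phi$ and an eigenvector $v$ with $\phi(v)\neq 0$ is a hands-on unpacking of the same fact, so it is correct in spirit; the paper's formulation simply sidesteps the base-change and Galois bookkeeping you flagged as the main obstacle. One caution: do not invoke the ``generalised-eigenspace filtration'' in the sense of Theorem~\ref{t:eigenspace} here, since that theorem is proved \emph{using} the present proposition and the dependence would be circular. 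All that is needed (and all the paper implicitly uses for the eigenvalue claim about $D^*\otimes D'$) is an upper-triangular form for $D$ and $D'$ after a finite base change, which follows from iterated application of Corollary~\ref{c:main} alone.
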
 

\begin{proof} One can show (see \cite{Kedlaya}*{Lemma 5.3.3}) that there is a canonical isomorphism of $\bC$-vector spaces:
\[
\Ext_{\cKx}^1(V,V')\simeq H^1(V^*\otimes V').
\]
This fact together with Proposition \ref{prop:ranknull}  implies (i).

The eigenvalues of $D^*\otimes D'$ are of the form $-a+a'$ where $a$ and $a'$ are eigenvalues of $D$ and $D'$, respectively. By assumption, $-a+a'$ is never similar to zero; thus, kernel of $D^*\otimes D'$ is trivial. Part (ii) now follows from Part (i). 
\end{proof}

\begin{proof}[Proof of Theorem \ref{t:eigenspace}]
We may assume, without the loss of generality, that all eigenvalues of $D$ are already in $\cK$ (if not, do an appropriate base change).  We use induction on $\dim(V)$ to prove the theorem. If $\dim(V)=1$ then the claim is trivial. Suppose $\dim(V)>1$. Then by assumption $D$ has an eigenvector.  Hence, we have a one-dimensional invariant subspace $U\subset V$. Let $W:=V/U$. Then $D$ defines a differential operator on $W$. Moreover, $V\in \Ext_{\cKx}^1(U,W)$. By induction we may assume that $W$ decomposes as
\[
W= \bigoplus_i W(a_i), \quad a_i\in \cK,
\]
 for non-similar $a_i$. Now
\[
V\in \Ext_{\cKx}^1\Big(U, \bigoplus_i W(a_i) \Big)\simeq \bigoplus_i \Ext_{\cKx}^1(U, W(a_i)).
\]

If the eigenvalue $a$ of $D|_U$ is not similar to any $a_i$ then by the above proposition all the  extension groups are zero, and so $V=W\oplus U$ and the theorem is established. If $a$ is similar to $a_j$, for some $j$, then the only non-trivial component in the above direct sum is  $\Ext_{\cKx}^1(U, W(a_j))$. But it is easy to see that all differential operators in $\Ext_{\cKx}^1(U, W(a_j))$ have only a single eigenvalue $a_j$ (up to similarity). Hence $V$ has the required decomposition. 
\end{proof}

\subsection{Unipotent differential operators} \label{s:unipotent} Theorem \ref{t:eigenspace} implies that we only need to prove Jordan decomposition for differential operators with a unique eigenvalue. By translating if necessary, we can assume this eigenvalue is zero. Thus, we arrive at the following: 

\begin{defe}[Unipotent Operators] A differential operator is \emph{unipotent} if all of its eigenvalues are similar to zero. 
\end{defe} 

We now give a complete description of unipotent differential operators. 
Let $\Nilp_\bC$ denote the category whose objects are pairs $(V,N)$ where $V$ is a $\bC$-vector space and $N$ is a nilpotent endomorphism. The morphisms of $\Nilp_\bC$ are linear maps which commute with $N$.  Let $\mathcal{U}$ be the category of pairs $(V,D)$ consisting of a vector space $V/\cK$ and a unipotent differential operator $D:V\ra V$. Define a functor 
	\[
	F:\Nilp_\bC\ra \mathcal{U},\quad \quad (V,N)\mapsto (\cK \otimes_\bC V, d+N).
	\]

The following result appears (without proof) in \cite[\S 2]{Katz}. 

\begin{lem} \label{l:unipot} 		 The functor $F$ defines an equivalence of categories with inverse given by
	\[
	G:\mathcal{U}\ra \Nilp_\bC, \quad \quad (V,D)\mapsto \big(\ker(D^{\dim_{\cK}(V)}),D\big).
	\]
	\end{lem}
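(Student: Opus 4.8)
The strategy is to verify that $F$ and $G$ are mutually inverse functors, which amounts to three tasks: (1) check that $G$ is well-defined, i.e. that for a unipotent operator $D$ on an $n$-dimensional space $V$ (with $n=\dim_\cK V$), the restriction of $D$ to $\ker(D^n)$ is a nilpotent endomorphism of a $\bC$-vector space; (2) construct natural isomorphisms $G\circ F\simeq \mathrm{id}$ and $F\circ G\simeq \mathrm{id}$; (3) confirm functoriality, i.e. that $G$ sends $\cKx$-linear maps to $N$-commuting linear maps. I would organise the argument around the fact, implicit in Lemma~\ref{l:unipot}, that a unipotent differential operator is "built from" its kernel in the same way that a nilpotent linear operator is.

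\textbf{Key steps.} First I would handle the base case and the structural claim simultaneously by induction on $\dim_\cK V$ using the generalised eigenspace machinery already in place: since $D$ is unipotent, $0$ is its only eigenvalue (up to similarity), so $D$ has an actual eigenvector; but more is true --- I claim one can choose the eigenvalue to be literally $0$, not merely similar to $0$. Indeed if $Dv=av$ with $a$ similar to $0$, then $a=c^{-1}d(c)$ for some $c\in\cK$, and replacing $v$ by $cv$ gives $D(cv)=0$ by \eqref{eq:eigenvalues}. So $\ker(D)\neq 0$. Passing to $V':=V/\ker(D)\cdot\cK$ --- more precisely, picking a $\cK$-basis vector in $\ker D$ and quotienting --- one gets a shorter unipotent operator, and by induction $V'$ is of the form $\cK\otimes_\bC V''$ with $D'=d+N''$. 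The extension class of $V$ over this quotient lives in an $\Ext^1$ group which, by the propositions on extensions in \S\ref{ss:eigenspace}, is computed by $H^1$ of a unipotent operator, and one reads off that $V\simeq \cK\otimes_\bC V_0$ with $D=d+N$ for a nilpotent $N$. This simultaneously shows $\ker(D^n)$ spans $V$ over $\cK$ (so its $\bC$-dimension equals $n$), that $D=d+N$ acts on $\ker(D^n)$ as $N$ (since $d$ kills the constants in $\ker N\subset \cK\otimes V_0$... more carefully: $D^k(1\otimes w)=1\otimes N^k w$, so $\ker(D^n)=1\otimes V_0$ and $D|_{\ker D^n}=N$), and that $F\circ G\simeq\mathrm{id}$. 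The composite $G\circ F$ is then immediate: $G(F(V,N))=G(\cK\otimes V, d+N)=(\ker((d+N)^n), d+N)=(1\otimes V, N)\cong (V,N)$.

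\textbf{Functoriality and morphisms.} For the morphism statement, given a $\cKx$-linear map $\phi:(\cK\otimes V,d+N)\to (\cK\otimes W,d+M)$, $\cKx$-linearity means $\phi$ is $\cK$-linear and commutes with the action of $x$, i.e. $\phi\circ(d+N)=(d+M)\circ\phi$. Restricting to constants: for $w\in V\subset \cK\otimes V$ we have $(d+N)w=Nw\in V$, and $\phi(w)$ need not be constant a priori, but $\phi$ carries $\ker((d+N)^n)$ into $\ker((d+M)^n)$, which are exactly the constant subspaces, so $\phi|_V:V\to W$ is a well-defined linear map, and the intertwining relation restricted to $V$ reads $\phi|_V\circ N=M\circ\phi|_V$, i.e. $\phi|_V$ is a morphism in $\Nilp_\bC$. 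Conversely a $\bC$-linear $N$-commuting map extends $\cK$-linearly to an intertwiner of $d+N$ and $d+M$ (since $d$ acts only on scalars and $\phi|_V$ is $\bC$-linear). Checking these two assignments are mutually inverse and compatible with composition is routine.

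\textbf{Main obstacle.} The subtle point --- and the one I would be most careful about --- is the reduction establishing $F\circ G\simeq\mathrm{id}$, specifically the claim that every unipotent $D$ is \emph{globally} of the form $d+N$ rather than just having a filtration with such subquotients. This requires that the relevant extension classes vanish or, when they do not, that the gluing produces an operator still of the form $d+N'$ for a possibly larger nilpotent $N'$; concretely one must show that an extension of $(\cK\otimes\bC, d)$ by $(\cK\otimes V_0, d+N_0)$ as $\cKx$-modules is again of the form $\cK\otimes(\bC\oplus V_0)$ with derivation $d+N$. This is where one uses that $\Ext^1_{\cKx}(\cK,\cK)\cong H^1(\cK,d)=\cK/d(\cK)$ and the analogous statement for the unipotent pieces --- the cocycle describing the extension is a single series $a\in\cK$ (up to $d(\cK)$), and one checks that after a gauge transformation by $\exp$ of a primitive... rather, by subtracting a solution of $d(c)=a$ when one exists --- here one must be careful that $a$ may lie in the part of $\cK/d(\cK)$ that is \emph{not} hit, but for the \emph{unipotent} pieces the relevant $d$-cohomology is precisely $\bC$ (constants), so the extension is classified by a complex number, matching exactly the extension data of nilpotent Jordan blocks. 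Getting this bookkeeping exactly right, and in particular correctly identifying $H^1$ of the pieces, is the crux; everything else is formal.
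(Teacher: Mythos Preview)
Your overall induction scheme---find $v\in\ker D$, quotient, and lift---matches the paper's, but the mechanism you use to lift differs. You invoke the $\Ext^1$ machinery of \S\ref{ss:eigenspace}: by induction $V'=V/U$ is $F$ of something, the extension class lies in $\Ext^1_{\cKx}(V',U)\simeq H^1((V')^*\otimes U)$, and you want to identify this $\bC$-vector space with the corresponding $\Ext^1$ in $\Nilp_\bC$ so that $V$ itself lies in the essential image of $F$. The paper avoids this entirely and argues directly: lifting each $v_i\in\ker\big((D|_{V/U})^{n-1}\big)$ to $V$, one has $D^{n-1}v_i=a_iv$ for some $a_i\in\cK$; choosing $b_i\in\cK$ with $d^{\,n-1}(b_i)=a_i-a_{i,0}$ (possible because $d(t^k)=kt^k$ lets one integrate every non-constant term), one computes $D^{n-1}(v_i-b_iv)=a_{i,0}v$ and hence $D^n(v_i-b_iv)=0$. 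This produces an explicit $\cK$-basis of $V$ lying in $\ker(D^n)$, on which $D$ acts by a constant nilpotent matrix, and $F\circ G\simeq\mathrm{id}$ follows immediately.

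Your route is correct in outline, but the step you yourself flag as the ``main obstacle'' is exactly where the content lies, and as written it is not complete: knowing $\dim_\bC\Ext^1_{\cKx}(V',U)<\infty$ does not by itself tell you that every extension is in the image of $F$; you must actually check that $F_*:\Ext^1_{\Nilp_\bC}(V_0',\bC)\to\Ext^1_{\cKx}(F(V_0'),F(\bC))$ is surjective (equivalently an isomorphism). You gesture at this for the rank-one case via $\cK/d(\cK)\cong\bC$, but the general case needs either an explicit cocycle calculation or a dimension count on both sides. The paper's integration trick is precisely that cocycle calculation done by hand, and it is both shorter and more elementary than routing through $H^1$. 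What your approach buys is a cleaner conceptual picture (the equivalence is ``detected on $\Ext^1$''), and your treatment of functoriality on morphisms is a genuine addition, since the paper's proof only addresses objects.
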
 
	\begin{proof} 
		We first show that the composition $G\circ F$ equals the identity. Let $(V,N)\in \Nilp_\bC$ with $n:=\dim(V)$ and consider $F(V,N)=(V\otimes \cK, d+N)$. The kernel of the  operator $(d+N)^n$ acting on $V\otimes \cK$ is the set of all constant vectors. This is an $n$-dimensional $\bC$-vector space. 
	Since $d$ acts as 0 on this space, applying $G$ to $(\cK\otimes V,d+N)$ recovers the pair $(V,N)$. 
	
	Next, let $D:V\ra V$ be a unipotent differential operator and let $n:=\dim_{\cK}(V)$. We first show by induction that $\ker(D^n)$ contains $n$, $\cK$-linearly independent vectors. If $n=1$ this is obvious. If $n>1$, then there exists $v\in V$ such that $Dv=0$. Set $U:=\Span_{\cK}\{v\}$ and consider the differential module $V/U$. This has dimension $n-1$ so we may assume there exist $\{v_1,\dots, v_n\}$ $\cK$-linearly independent vectors in $\ker(D^{n-1})$. For each $v_i$ we have $D^{n-1}v_i+U=U$ and hence $D^{n-1}v_i=a_iv$ for some $a_i\in \cK$. Now observe that we can choose $b_i$ such that $d^{n-1}(b_i)=a_i-a_{i,0}$ where $a_{i,0}$ is the constant term of $a_i$; since we can always ``integrate'' elements with no constant term. Now we have
	\[
	D^{n-1}(v_i-b_iv) =D^{n-1}v_i-D^{n-1}(b_iv) =
	a_iv-\sum_{j=0}^{n-1}\binom{n-1}{j}d^j(b_i)D^{n-1-j}(v)=
	a_iv -d^{n-1}(b_i)v = a_{i,0}v.
	\]
	Hence $D^n(v_i-b_iv)=D(a_{i,0}v)=0$ so $\{v,v_1-b_1v,\dots,v_{n-1}-b_{n-1}v\}$ is a set of $\cK$-linearly independent vectors in $\ker(D^n)$. 
	
	Note the functor $G$ sends $V$ to the $\bC$-vector space $W:=\ker(D^n)=\Span_{\bC}\{v,v_1-b_1v,\dots,v_{n-1}-b_{n-1}v\}$. Moreover, $D$ induces a $\bC$-linear operator $N$ on $W$. By construction, this operator is nilpotent and for this basis, the matrix of $N$ is constant (i.e., its entries belong to $\bC$). Applying the functor $F$ to $(W, N)$ now recovers the differential module $(V,D)$. 
\end{proof}

\begin{rem} A formal differential operator $D$ is said to be \emph{regular singular} if it has a matrix representation of the form 
\[
A_0+A_1t+\cdots, \quad \quad A_i\in \gl_n(\bC).
\]
It is known that, in this case, $D$ can actually be represented by a constant matrix; i.e., by a matrix $A\in \gl_n(\bC)$. The conjugacy class of $A$ is uniquely determined by $D$ and is called the \emph{monodromy} \cite[\S 3]{BV}.  The above lemma implies that a unipotent differential operator is the same as a regular singular differential operator with unipotent monodromy. 
\end{rem}

\subsection{Proof of Theorem \ref{t:main} (Jordan Decomposition)}
The uniqueness part of the theorem is relatively easy. Since we don't have anything new to add to Levelt's original proof, we refer the reader to \cite{Levelt} for the details. It remains to prove existence. 

Let $D:V\ra V$ be a formal   differential operator. By Theorem \ref{t:eigenspace}, there exists a positive integer $b$ such that $D_b:V_b\ra V_b$ admits a generalised eigenspace decomposition. Thus, $D_b$ can be represented by a block diagonal matrix where each block is upper triangular with a unique (up to similarity) eigenvalue. Thus, we may assume without the loss of generality that $D_b$ has a unique, up to similarity, eigenvalue $a$. Replacing $D_b$ by $D_b-a$, we may assume that $D_b$ is unipotent in which case the result follows from Lemma \ref{l:unipot}. This proves the existence of Jordan decomposition for $D_b$.

We now show that the Jordan decomposition of $D_b$ descends to a decomposition of $D$. The proof is similar to the linear setting. Picking a $\cK$-basis of $V$ and extending it to a basis of $V_b$ allows us to write $D_b=d+A$ where $A$ is a matrix with entries in $\cK$. Let $S_b=d+B$ and $N_b=C$ for matrices $B$ and $C$ with respect to this basis. Then, for any $\sigma \in \Gal(\cK_b/\cK)$, it is clear that $d+A=d+\sigma(B)+\sigma(C)$ is a second Jordan decomposition of $D_b$. Thus, we must have $C=\sigma(C)$ and $\sigma(B)=B$. Hence, $d+B$ and $C$ are defined over $\cK$. 
\qed


\section{Formal $G$-connections}\label{s:GConnections}

\subsection{Description of semisimple $G$-connections}  
We start by recalling basic facts about the differential Galois group. 
Let $I_\cK$ denote the differential Galois group of $\cK$ as defined in \cite[\S 2.5]{Katz}. By definition, for every $G$-connection $\nabla$, we get a homomorphism $\rho_\nabla: I_\cK\ra G$.  The Zariski closure of the image of $\rho_\nabla$ is an algebraic subgroup of $G$ called the \emph{differential Galois group} of $\nabla$ and denoted by $G_\nabla$. For an alternative point of view on $G_\nabla$, cf. \cite{SingerVanDerPut}*{\S 1.4}. 

\begin{proof}[Proof of Theorem \ref{t:gsemisimple}]
We are now ready to prove the theorem. 
  Suppose the differential operator $d+A$, $A\in \fg(\cK)$, is gauge equivalent to $d+X$ with $X\in \fh(\cK')$ for some finite extension $\cK'$ of $\cK$. Then $\ad(X)\in \ad(\fh)(\cK')$ and $\ad(\fh)$ is contained in some Cartan subalgebra of $\gl(\fg)$. Then there exists $g\in \GL(\fg)(\bC)$ such that $g^{-1}\ad(\fh)g\in \mathfrak{d}$ where $\mathfrak{d}$ consists of diagonal matrices in $\gl(\fg)$. As $dg=0$,  the gauge action of $g$ on $d+\ad(X)$ yields $d+g^{-1}\ad(X)g$. Thus $d+\ad(X)$ is gauge equivalent to a diagonal differential operator and is therefore semisimple by Theorem \ref{t:semisimple}. As $d+\ad(X)$ is gauge equivalent to $d+\ad(A)$, this implies that $d+\ad(A)$ is semisimple. By definition, then $d+A$ is semisimple. 
	
	Conversely, suppose that $d+A$ is semisimple, i.e. $d+\ad(A)$ is semisimple. By Theorem \ref{t:semisimple}, $d+\ad(A)$ is diagonalizable after a finite extension $\cK'$ of $\cK$. This implies that the image of the composition 
	\[
	I_{\cK'} \ra G\ra \GL(\fg) 
	\]
	is a subgroup of a torus in $\GL(\fg)$. This then implies that the image of $I_{\cK'}\ra G$ is a subgroup of a maximal torus $H\subset G$; that is, the above map factors through a map $I_{\cK'}\ra H$. Thus, $d+A$ is equivalent to a connection of the form $d+X$ for some $X\in \fh(\cK')$. 
\end{proof}

	\begin{rem} 
 Let $\nabla=d+X$ be a semisimple formal $G$-connection. By Theorem \ref{t:gsemisimple}, we may assume (after a finite base change) that $X\in \fh(\cK)$ where $\fh$ is a Cartan subalgebra of $\fg$. Write $X=\sum_i X_it^i$ where $X_i\in \fh(\bC)$ and set 
\[
X^+=\sum_{i\geq 1}X_it^i\in \fh[\![t]\!], \quad \quad X^-=\sum_{i\leq 0} X_it^i\in \fh[t^{-1}].
\]
 Let
\[
A:=\exp\Big( -\int \frac{X^+}{t} dt\Big)\in H(\CT).
\]
Then gauge transformation of $\nabla$ by $A$ yields $d+X^-$. This is the canonical form of  $\nabla$ in the sense of \cite{BV}. 
\end{rem}

\subsection{Jordan decomposition for $G$-connections}
We start with a lemma, which is an analogue of a standard result in Lie theory, c.f. \cite{humphreys}*{\S 6.4}.

\begin{lem} \label{l:preservation} Let $\g\subset \gl(V)$ be a Lie subalgebra. Let $d+A:V\ra V$,  $A\in \fg(\cK)$, be a differential operator with Jordan decomposition (as a $\GL(V)$-connection) $D=d+X+N$. Then $X\in \fg(\cK)$; moreover, $d+X$ is a semisimple $G$-connection.  
\end{lem}
\begin{proof} 
By definition $\fg(\cK)$ is a $(d+\ad A)$-invariant subspace of $\gl(V)\otimes \cK$. Thus, by Lemma \ref{l:invariance}, it is also $(d+\ad X)$-invariant. By definition, $\fg(\cK)$ is $d$ invariant. Thus, $\fg(\cK)$ is $\ad X$-invariant. This implies that $(d+\ad X)-d:\fg(\cK)\to \fg(\cK)$ is a $\cK$-linear derivation on $\g(\cK)$ and hence $X\in \g(\cK)$ (since every $\cK$-linear derivation is inner). 
\end{proof}

\begin{proof}[Proof of Theorem \ref{t:main2}]
We are now ready to prove the theorem. Let $\nabla=d+A$ be a $G$-connection. Note that the adjoint action gives an embedding $\fg(\cK)\subset \gl(\fg(\cK))$. Let $d+X+N$ denote the Jordan decomposition of $d+A$ as a differential operator $\fg(\cK)\ra \fg(\cK)$.  Then by the previous lemma, $X\in \fg(\cK)$ and $d+X$ is a semisimple $G$-connection.  It follows that $N=d+A-(d+X)$ is a nilpotent element of $\fg(\cK)$. Now $d+X$ and $N$ commute in the extended loop algebra of $\gl(\fg(\cK))$. This implies that they commute in the extended loop algebra of $\fg(\cK)$, this establishes the existence of Jordan decomposition. 

For uniqueness, suppose $d+X_1+N_1$ and $d+X_2+N_2$ are Jordan decompositions for $\nabla$. Then $d+\ad(X_1)+\ad(N_1)$ and $d+\ad(X_2)+\ad(N_2)$ are Jordan decompositions for $\ad(\nabla)$. By uniqueness of Jordan decomposition for differential operators (Theorem \ref{t:main}), we obtain $\ad(X_1)=\ad(X_2)$ and $\ad(N_1)=\ad(N_2)$. As the adjoint representation is faithful, we conclude $X_1=X_2$ and $N_1=N_2$. 
\end{proof} 

\begin{rem} A formal $G$-connection $\nabla$ is called \emph{unipotent} if its semisimple part is trivial. One can show that $\nabla$ is unipotent if and only if its differential Galois group $G_\nabla$ is unipotent. According to \cite[thm. 11.2]{SingerVanDerPut}, $G_\nabla$ is then a one-parameter subgroup of $G$ generated by a unipotent element. This implies that the map 
\[
Y \mapsto d+Y
\]
from nilpotent elements of $\fg(\bC)$ to formal unipotent $G$-connections defines a bijection between nilpotent orbits in $\fg(\bC)$ and equivalence classes of unipotent connections. Thus, one obtains a generalisation of Katz's results (Lemma \ref{l:unipot}) to the setting of $G$-connections.
\end{rem}




\begin{bibdiv}
\begin{biblist}

\bib{BV}{article}
{
	AUTHOR = {Babbitt, B.},
	Author={Varadarajan, P.}, 
	TITLE = {formal   reduction theory of formal   differential equations:
		a group theoretic view},
	JOURNAL = {Pacific J. Math.},
	FJOURNAL = {Pacific Journal of Mathematics},
	VOLUME = {109},
	YEAR = {1983},
	NUMBER = {1},
	PAGES = {1--80},
}

\bib{BoalchYamakawa}{article}
{
   author = {Boalch, P.},
   author={Yamakawa, D.},
   title = {Twisted wild character varieties},
  journal = {arXiv:1512.08091},
       year = {2015},
  }

\bib{frenkel}{book}{
	author={Frenkel, Edward},
	title={Langlands Correspondence for Loop Groups},
	series={Cambridge Studies in Advanced Mathematics},
	volume={103},
	publisher={Cambridge University Press, Cambridge},
	date={2007},
	pages={xvi+379},
	isbn={978-0-521-85443-6}
}

\bib{Hukuhara}{article}
	{
	author={Hukuhara, M.},
   title={Th\'eor\`emes fondamentaux de la th\'eorie des \'equations
   diff\'erentielles ordinaires. II},
   journal={Mem. Fac. Sci. Ky\=usy\=u Imp. Univ. A.},
   volume={2},
   date={1941},
   pages={1--25}
	}

\bib{humphreys}{book}{
	author={Humphreys, J. E.},
	title={Introduction to Lie Algebras and Representation Theory},
	series={Graduate Texts in Mathematics},
	volume={9},
	note={Second printing, revised},
	publisher={Springer-Verlag, New York-Berlin},
	date={1978},
	pages={xii+171},
	isbn={0-387-90053-5}
}

\bib{Kac}{book}{
	author={Kac, V.},
	title={Infinite-dimensional Lie algebras},
	edition={3},
	publisher={Cambridge University Press, Cambridge},
	date={1990},
	pages={xxii+400},
	isbn={0-521-37215-1},
	isbn={0-521-46693-8},
}

\bib{KamgarpourSage}{article}
	{
		Author={Kamgarpour, M.},
		Author={Sage, D.},
		title={A geometric analogue of a conjecture of Gross and Reeder},
		Journal={arXiv:1606.00943},
		date={2016}
	}

\bib{KatzNilpotent}{article}
{
	author={Katz, Nicholas M.},
	title={Nilpotent connections and the monodromy theorem: Applications of a
		result of Turrittin},
	journal={Inst. Hautes \'Etudes Sci. Publ. Math.},
	number={39},
	date={1970},
	pages={175--232},
	issn={0073-8301}
}

\bib{Katz}{article}
{
    AUTHOR = {Katz, N. M.},
     TITLE = {On the calculation of some differential {G}alois groups},
   JOURNAL = {Invent. Math.},
      VOLUME = {87},
      YEAR = {1987},
    NUMBER = {1},
     PAGES = {13--61},
   }

\bib{Kedlaya}{book}{
	author={Kedlaya, K. S.},
	title={$p$-adic differential equations},
	series={Cambridge Studies in Advanced Mathematics},
	volume={125},
	publisher={Cambridge University Press, Cambridge},
	date={2010},
	pages={xviii+380},
	isbn={978-0-521-76879-5},
	doi={10.1017/CBO9780511750922},
}

\bib{Levelt}{article} 
{ Author={Levelt, G.},
TITLE = {Jordan decomposition for a class of singular differential
              operators},
   JOURNAL = {Ark. Mat.},
  FJOURNAL = {Arkiv f\"or Matematik},
    VOLUME = {13},
      YEAR = {1975},
     PAGES = {1--27},
}

\bib{Luu}{article}
	{
	Author={Luu, M.},
	Title={Local Langlands duality and a duality of conformal   field theories},
	Journal={arXiv:1506.00663}, 
	Date={2015},
	}

\bib{Malgrange2}{article}{
	author={Malgrange, B.},
	title={Sur les points singuliers des \'equations diff\'erentielles},
	journal={Enseignement Math. (2)},
	volume={20},
	date={1974},
	pages={147--176},
}

\bib{Malgrange}{article}{
	author={Malgrange, B.},
	title={Sur la r\'{e}duction formelle des \'{e}quations diff\'{e}rentielles \`{a} singularit\'{e}s irr\'{e}guli\`{e}res},
	date={1979}
	}

\bib{Ore}{article}{
   author={Ore, O.},
   title={Theory of non-commutative polynomials},
   journal={Ann. of Math. (2)},
   volume={34},
   date={1933},
   number={3},
   pages={480--508},
   issn={0003-486X},
}
	
\bib{Praagman}{article}
	{Title = {The formal   classification of linear difference operators},
journal = {Indagationes Mathematicae (Proceedings)},
volume = {86},
number = {2},
pages = {249 - 261},
year = {1983},
author = {Praagman, C.}

}

\bib{raskin}{article}
{
	Author={Raskin, S.},
	Title={On the notion of spectral decomposition in local geometric Langlands},
	journal={arXiv:1511.01378},
	Date={2015}
}

\bib{Robba}{article}{
   author={Robba, P.},
   title={Lemmes de Hensel pour les op\'erateurs diff\'erentiels.
   Application \`a la r\'eduction formelle des \'equations
   diff\'erentielles},
   journal={Enseign. Math. (2)},
   volume={26},
   date={1980},
   number={3-4},
   pages={279--311 (1981)}
}

\bib{Sabbah}{article}{
 author={Sabbah, C.},
 title={Introduction to algebraic theory of linear systems of differential equations}, 
 Journal={Unpublished lecture notes}
 }

\bib{SingerVanDerPut}{book}{
	author={van der Put, M.},
	author={Singer, M. F.},
	title={Galois theory of linear differential equations},
	series={Grundlehren der Mathematischen Wissenschaften [Fundamental
		Principles of Mathematical Sciences]},
	volume={328},
	publisher={Springer-Verlag, Berlin},
	date={2003},
	pages={xviii+438},
	isbn={3-540-44228-6},
	doi={10.1007/978-3-642-55750-7},
}

\bib{Turrittin}{article}{
Author={Turrittin, H. L.}, 
TITLE = {Convergent solutions of ordinary linear homogeneous
              differential equations in the neighborhood of an irregular
              singular point},
   JOURNAL = {Acta Math.},
  FJOURNAL = {Acta Mathematica},
    VOLUME = {93},
      YEAR = {1955},
     PAGES = {27--66},
}

\bib{Varadarajan}{article}{
   author={Varadarjan, V. S.},
   title={Linear formal   differential equations: a modern point of view},
   Journal = {Bulletin of AMS}, 
  Year= {1996},
  Volume={33},
  Number={1},
}

\bib{Wasow}{book}{
   author={Wasow, W.},
   title={Asymptotic expansions for ordinary differential equations},
   series={Pure and Applied Mathematics, Vol. XIV},
   publisher={Interscience Publishers John Wiley \& Sons, Inc., New
   York-London-Sydney},
   date={1965},
   pages={ix+362}
}

\end{biblist} 
\end{bibdiv} 
  \end{document}